%
%
\documentclass[preprint]{elsarticle}

\usepackage{lineno,hyperref}
\modulolinenumbers[5]

\usepackage{amsmath,amssymb,amsthm}
\usepackage{setspace}
\usepackage{moreverb}
\usepackage{graphicx}
\setcounter{MaxMatrixCols}{20}

\oddsidemargin=0cm \evensidemargin=0cm
\textwidth 160mm \textheight 210mm

\journal{Journal of Algebra}

\bibliographystyle{elsarticle-num}

\newtheorem{theorem}{Theorem}[section]
\newtheorem{lemma}[theorem]{Lemma}
\newtheorem{corollary}[theorem]{Corollary}
\newtheorem{proposition}[theorem]{Proposition}
\newtheorem{algorithm}[theorem]{Algorithm}

\theoremstyle{definition}
\newtheorem{definition}[theorem]{Definition}
\newtheorem{example}[theorem]{Example}

\theoremstyle{remark}
\newtheorem{remark}[theorem]{Remark}

\numberwithin{equation}{section}

\input xy
\xyoption{all}

\newcommand\kk{\Bbbk}
\newcommand{\bZ}{{\mathbb{Z}}}
\newcommand{\lra}{{\longrightarrow}}
\newcommand{\cA}{{\mathcal A}}
\newcommand{\dm}{\mathrm {dim }}
\newcommand{\bR}{\mathbb{R}}
\newcommand{\lcm}{\mathrm {lcm\, }}

\begin{document}

\begin{frontmatter}

\title{Pruned cellular free resolutions of monomial ideals}

\author{Josep \`Alvarez Montaner\fnref{footnoteJA}}
\address{Departament de Matem\`atiques,
Universitat Polit\`ecnica de Catalunya, Spain}
\fntext[footnoteJA]{Partially supported by the {\it Generalitat de Catalunya} grant SGR2017-932 and
the Spanish {\it Ministerio de Econom\'ia y Competitividad} grant MTM2015-69135-P. He is a member of
the Barcelona Graduate School of Mathematics (BGSMath).}
\ead{Josep.Alvarez@upc.es}

\author{Oscar Fern\'andez-Ramos}

\author{Philippe Gimenez\fnref{footnotePG}\corref{mycorrespondingauthor}}
\address{
Instituto de Investigaci\'on en Matem\'aticas de Valladolid (IMUVA),
Universidad de Valladolid, Spain}
\fntext[footnotePG]{Partially supported by the Spanish
{\it Ministerio de Econom\'ia y Competitividad} grant MTM2016-78881-P
and {\it Consejer\'{\i}a de Educaci\'on de la Junta de Castilla y Le\'on} grant VA128G18.}
\ead{pgimenez@agt.uva.es}
\cortext[mycorrespondingauthor]{Corresponding author}

\begin{abstract}
Using discrete Morse theory, we give an algorithm that prunes the excess of information
in the Taylor resolution and constructs a new cellular free resolution for an arbitrary monomial ideal.
The pruned resolution is not simplicial in general, but we can slightly modify our algorithm in order to obtain
a simplicial resolution. We also show that the Lyubeznik resolution fits into our
pruning strategy.
The pruned resolution is not always minimal but it is a lot closer to the minimal resolution
than the Taylor and the Lyubeznik resolutions as we will
see in some examples.
We finally use our methods to give a different approach to the theory of splitting of monomial ideals.
We deduce from this splitting strategy that the pruned resolution is always minimal in the case
of edge ideals of paths and cycles.
\end{abstract}

\begin{keyword}
free resolution\sep monomial ideal\sep discrete Morse theory\sep Betti splitting
\MSC[2010] 13D45\sep 13N10
\end{keyword}

\end{frontmatter}


\section{Introduction}

Let $R=\kk[x_1,\dots,x_n]$ be the polynomial ring over a field $\kk$ and $I\subseteq R$ a monomial ideal.
The study of minimal free resolutions of these ideals has been a very active area of research
during the last decades. There are topological and combinatorial formulae, as those of Hochster \cite{Hoc}
or Gasharov, Peeva and Welker \cite{GPW}, to describe their multigraded Betti numbers but, except for some
specific classes  of monomial ideals (see, e.g., \cite{EK}, \cite{BPS} or \cite{MSY}), the problem of describing a minimal
multigraded free resolution explicitly has shown to be difficult.

\vskip 2mm

Another strategy is to study non-minimal free resolutions.
These reveal less information than minimal free resolutions do but are
often much easier to describe. The most significant ones, that we will also comment on in this paper,
are the Taylor resolution \cite{Tay66} and
the Lyubeznik resolution \cite{Lyu88}, but one
should also mention the Scarf resolution of arbitrary monomial ideals obtained by deformation of exponents \cite{BPS}
and the hull resolution \cite{BS}.
An interesting feature of the Taylor and the Lyubeznik
resolutions is that they fit in the theory of simplicial resolutions
introduced by Bayer, Peeva and Sturmfels in \cite{BPS} and further extended
to regular cellular resolutions and CW-resolutions in \cite{BS} and \cite{JW} respectively. The idea behind
these three concepts is to associate to a free resolution of a monomial ideal a
simplicial complex (respectively a regular cell complex, a CW-complex) that carries in
its structure the algebraic structure of the free resolution. It is worth pointing
out that Velasco proved in \cite{Vel} that there exist monomial
ideals whose minimal free resolutions cannot be described by a CW-complex.

\vskip 2mm

By adapting the discrete Morse theory developed by Forman \cite{For} and
Chari \cite{CH},  Batzies and Welker provided in \cite{BW} a method to reduce
a given regular cellular resolution. In particular, they proved that the Lyubeznik
resolution can be obtained in this way from the Taylor resolution.
Let's point out that discrete Morse theory has the inconvenient fact that it can't be used
iteratively. To overcome this issue, one can use the algebraic discrete Morse theory developed independently by
 Sk\"oldberg \cite{Sko} and J\"ollenbeck and Welker \cite{JW}.
In this work, we use a similar strategy to reduce the Taylor
resolution and obtain cellular and simplicial free resolutions that
are closer to the minimal one than the Lyubeznik
resolution. Essentially, the information given by the Taylor
resolution can be encoded in a directed graph and the obstruction to
its minimality can be observed in some of the edges of this graph.
What we will do is to remove, in a convenient order, some of these
edges to provide a smaller resolution. In some sense, we are pruning
the excess of information given by the Taylor resolution in a simple and efficient way.

\vskip 2mm

The organization of this paper is as follows. In Section \ref{cellular},
we review the notion of  cellular resolution and introduce the basics
on discrete Morse theory that will be needed throughout this work.
In Section \ref{prune}, we present our main results. We first provide an algorithm
(Algorithm \ref{alg1}) that, starting from the Taylor resolution of a monomial ideal,
allows to construct a smaller cellular free resolution
(Theorem \ref{T1}).
The resolution that we obtain is not simplicial in general, but
we can adapt our pruning algorithm to produce a simplicial free resolution (Algorithm \ref{alg3}).
Indeed, the Lyubeznik resolution fits into this pruning strategy as shown in Algorithm \ref{alg2}.
Other variants of our method are also mentioned.

\vskip 2mm

In Section \ref{examples}, we illustrate our results with several examples.
We implemented our algorithms using CoCoALib \cite{cocoa}
for constructing pruned resolutions in the non-trivial examples contained in this section.
Finally, in Section \ref{split} we present a connection between our method and the
theory of Betti splittings introduced by Eliahou and Kervaire \cite{EK} and
later developed by Francisco, H\`a and Van Tuyl \cite{FHV}. We provide a sufficient condition
for having a Betti splitting by checking some prunings in our algorithm. We use this
approach to prove that the pruned resolution is minimal for edge ideals associated to paths and cycles.

\section{Cellular resolutions using discrete Morse theory} \label{cellular}

Let $R=\kk[x_1, \ldots, x_n]$ be the polynomial ring in $n$ variables with coefficients
in a field $\kk$. An ideal $I\subseteq R$ is monomial if it may be generated
by monomials ${\bf x}^\alpha:=x_1^{\alpha_1}\cdots x_n^{\alpha_n}$, where $\alpha \in \bZ_{\geq 0}^n$.
As usual, we set $|\alpha|:= {\alpha_1}+\cdots +{\alpha_n}$. Moreover, given a monomial set of generators of an ideal $I$, $\{m_1,\dots,m_r\}$,
we will consider the monomials $m_{\sigma}:={\rm lcm}(m_i \hskip 2mm | \hskip 2mm \sigma_i=1)$ for any $\sigma \in \{0,1\}^r$.
Denote by $\varepsilon_1,\dots,\varepsilon_r$ the standard basis of $\bZ^r$.

\vskip 2mm

A $\bZ^n$-graded free resolution of $R/I$ is an exact sequence of free $\bZ^n$-graded
modules:
\begin{equation}\label{resolution of I}
\mathbb{F}_{\bullet}: \hskip 3mm \xymatrix{ 0 \ar[r]& F_{p}
\ar[r]^{d_{n}}& \cdots \ar[r]& F_1 \ar[r]^{d_1}& F_{0} \ar[r]& R/I
\ar[r]& 0},
\end{equation}
where the $i$-th term is of the form
$$F_i =\bigoplus_{\alpha \in \bZ^n} R(-\alpha)^{\beta_{i,\alpha}}\,.$$
We say that $\mathbb{F}_{\bullet}$ is minimal if the matrices of the
homogeneous morphisms $d_i: F_i\longrightarrow F_{i-1}$ do not contain
invertible elements. In this case, the exponents $\beta_{i,\alpha}$ form a
set of invariants of $R/I$ known as its {\it multigraded Betti numbers}.
Throughout this work, we will mainly consider the coarser $\bZ$-graded free resolution. In this case, we
will encode the $\bZ$-graded Betti numbers in the so-called  {\it Betti diagram} of $R/I$ where the entry
on the $i$th column and $j$th row of the table is $ \beta_{i,i+j}$:

\begin{center}
 $\begin{matrix}
&0&1&2&\cdots \\ \text{total:}& \text{.} &\text{.}&\text{.}& \\
\text{0:} & \beta_{0,0} & \beta_{1,1} & \beta_{2,2} & \cdots \\\text{1:}& \beta_{0,1} & \beta_{1,2} & \beta_{2,3} & \cdots \\
\vdots& \vdots & \vdots & \vdots \\\end{matrix}
$
\end{center}

\vskip 2mm

\subsection{Cellular resolutions}

A CW-complex $X$ is a topological space obtained by attaching cells of increasing dimensions
to a discrete
set of points $X^{(0)}$.  Let $X^{(i)}$ denote the set of $i$-cells of  $X$ and
consider the set of all cells $X^{(\ast)}:=\bigcup_{i\geq 0} X^{(i)}$. Then, we can view $X^{(\ast)}$
as a poset with the partial order  given by $\sigma' \leq \sigma$ if and only if $\sigma'$
is contained in the closure of $\sigma$. We can also give a $\bZ^n$-graded structure to $X$ by means of an
order preserving map $gr: X^{(\ast)} \lra \bZ_{\geq 0}^n$.

\vskip 2mm

We say that the free resolution (\ref{resolution of I}) is {\it cellular} (or is a {\it CW-resolution})
if there exists a $\bZ^n$-graded CW-complex $(X, gr)$ such that, for all $i\geq 1$:

\begin{itemize}
 \item[$\cdot$]
 there exists a basis $\{e_\sigma\}$ of $F_i$ indexed by the $(i-1)$-cells of $X$, such that if
 $e_\sigma \in R(-\alpha)^{\beta_{i,\alpha}}$ then  $gr(\sigma)=\alpha$, and

 \item[$\cdot$]
the differential $d_i: F_i\longrightarrow F_{i-1}$ is given
by
 $$e_\sigma \hskip 2mm \mapsto \sum_{\sigma \geq \sigma' \in X^{(i-1)}} [\sigma: \sigma'] \hskip 1mm {\bf x}^{gr(\sigma) - gr(\sigma') } \hskip 1mm e_{\sigma'}\ ,\quad \forall \sigma \in X^{(i)}$$
where $[\sigma: \sigma']$ denotes the coefficient of $\sigma'$ in the image of $\sigma$ by the differential map in the cellular homology
of $X$.
\end{itemize}
In what follows, whenever we want to emphasize such a cellular structure, we will denote the free resolution as
$\mathbb{F}_{\bullet}=\mathbb{F}_{\bullet}^{(X,gr)}$. If $X$ is a simplicial complex, we say
that the free resolution is {\it simplicial}. This is the case for the following two well-known examples.

\vskip 2mm

$\bullet$ {\bf The Taylor resolution}:  The most recurrent example of simplicial free resolution is the Taylor resolution
discovered in \cite{Tay66}. Using the above terminology, we can describe it as follows.
Let $I=\langle m_1,\dots, m_r \rangle \subseteq R$ be a monomial ideal.
Consider the full simplicial complex on $r$ vertices, $X_{\tt Taylor}$, whose faces are labelled by
$\sigma \in \{0,1\}^r$ or, equivalently, by the corresponding monomials $m_\sigma$. We have a natural
$\bZ^n$-grading on $X_{\tt Taylor}$ by assigning $gr(\sigma)=\alpha \in \bZ^n$ where ${\bf x}^\alpha=m_\sigma$.
The {\it Taylor resolution} is the simplicial resolution $\mathbb{F}_{\bullet}^{(X_{\tt Taylor}, gr)}$.

\vskip 2mm

$\bullet$ {\bf The Lyubeznik resolution}:  Another important example of simplicial resolution is the one considered by Lyubeznik
in \cite{Lyu88}. Let's s start fixing an order $m_1 \leq \cdots \leq m_r$ on a generating set of a monomial ideal $I\subseteq R$.
Consider the simplicial subcomplex $X_{\tt Lyub} \subseteq X_{\tt Taylor}$ whose faces of dimension $s$ are labelled by
those $\sigma=\varepsilon_{i_0}+ \cdots + \varepsilon_{i_s} \in \{0,1\}^r$ with $i_0<\cdots <i_s$
such that, for all $t<s$ and all $j <i_t$,
$$ m_j \not | \hskip 2mm {\rm lcm}(m_{i_t},\dots , m_{i_s}).$$
The {\it Lyubeznik resolution} is the simplicial resolution $\mathbb{F}_{\bullet}^{(X_{\tt Lyub}, gr)}$.

\vskip 2mm

\subsection{Discrete Morse theory}
Forman introduced in  \cite{For} the discrete Morse theory as a method to
reduce the number of cells in a CW-complex without changing its
homotopy type. Batzies and Welker adapted this technique in \cite{BW}
to the study of cellular resolutions; see also \cite{Wel07}.  Indeed, they used the
reformulation of discrete Morse theory in terms of acyclic matchings
given by Chari in \cite{CH} in order to obtain, given a  regular  cellular
resolution (most notably the Taylor resolution), a reduced
 cellular resolution.

\vskip 2mm

This is also our approach in this work. Let's start recalling from \cite{BW} the preliminaries on discrete Morse theory.
Consider the directed graph $G_X$ on the set of cells of a regular $\bZ^n$-graded CW-complex $(X,gr)$ which edges are given by
$$E_X=\{ \sigma \lra \sigma' \hskip 2mm | \hskip 2mm  \sigma' \leq \sigma, \hskip 2mm   \dim \sigma' = \dim \sigma -1 \}.$$
For a given set of edges $\cA \subseteq E_X$, denote by $G_X^{\cA}$ the graph obtained by reversing the direction
of the edges in $\cA$, i.e., the directed graph with edges\footnote{For the sake of clarity, the arrows that we reverse will be denoted
by $\Longrightarrow$.}
$$E_X^{\cA}= (E_X \setminus \cA) \cup \{ \sigma' \Longrightarrow \sigma \hskip 2mm | \hskip 2mm  \sigma \lra \sigma' \in \cA \}.$$
When each cell of $X$ occurs in at most one edge of $\cA$, we say that $\cA$ is a {\it matching} on $X$.
A matching $\cA$ is {\it acyclic} if the associated graph $G_X^{\cA}$ is acyclic, i.e., does not
contain any directed cycle.
Given an acyclic matching  $\cA$ on $X$, the $\cA$-{\it critical cells} of $X$ are the cells of $X$ that are
not contained in any edge of $\cA$.
Finally, an acyclic matching $\cA$ is {\it homogeneous} whenever $gr(\sigma)=gr(\sigma')$
for any edge $\sigma \lra \sigma' \in \cA$.

\begin{proposition}[{\cite[Proposition 1.2]{BW}}]
Let $(X,gr)$ be a regular $\bZ^n$-graded CW-complex and $\cA$ a homogeneous acyclic matching. Then, there is a
{\rm(}not necessarily regular{\rm)} CW-complex $X_{\cA}$
whose $i$-cells are in one-to-one correspondence with the $\cA$-critical $i$-cells of $X$, such that $X_{\cA}$ is
homotopically equivalent to $X$, and that inherits the $\bZ^n$-graded structure.
\end{proposition}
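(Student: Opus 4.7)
The plan is to establish the proposition by iterated elementary collapses of the matched pairs in $\cA$, following the Forman--Chari approach to discrete Morse theory adapted to the regular CW setting. Each edge $\sigma \to \sigma' \in \cA$ encodes a codimension-one face relation; when $\sigma'$ is a free face of $\sigma$ in the current complex, the pair may be removed via an elementary CW-collapse, producing a strong deformation retract with two fewer cells. For a single such pair, the homotopy equivalence is classical.

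The difficulty is that a given matched pair need not immediately form a free face in $X$, since $\sigma'$ typically lies in the boundary of other cells, some of which may themselves be matched. This is precisely where the acyclicity hypothesis enters. Because $G_X^{\cA}$ has no directed cycles, one obtains a partial order on the cells of $X$ compatible with the modified graph, and hence a linear extension. I would use this to order the matched pairs so that at each stage the next pair to be collapsed forms a free face in the complex obtained from $X$ by all preceding collapses --- concretely, a source of $G_X^{\cA}$ restricted to the surviving matched cells provides such a pair. Iterating until $\cA$ is exhausted leaves a complex $X_\cA$ whose cells are exactly the critical cells of $X$, in the desired bijection. The attaching maps of $X_\cA$ are not given by restriction; they are encoded by \emph{gradient paths} in $G_X^{\cA}$, that is, alternating sequences of ordinary and reversed edges going from the boundary of a critical cell down to lower-dimensional critical cells.

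The $\bZ^n$-graded structure is then inherited for free: homogeneity of $\cA$ means $gr(\sigma)=gr(\sigma')$ on every matched pair, so no collapse alters the grading of any surviving cell. Order-preservation of $gr$ on $X_\cA^{(\ast)}$ follows because any gradient path from a critical $\sigma$ descends only through cells of grade at most $gr(\sigma)$ in the original grading, hence the critical cells bounded by $\sigma$ in $X_\cA$ satisfy the required inequality.

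The main obstacle is twofold. First, one must verify that acyclicity is exactly the right combinatorial condition to rule out circular dependencies among pending collapses, so that the induction above is legitimate at every stage. Second, one must confirm that the resulting (non-regular) CW-structure is actually well-defined, i.e., that the attaching maps produced by composing elementary collapses along gradient paths are continuous and compatible with the cell filtration. This second point is the delicate one: Forman originally treats it via a Morse-theoretic gradient vector field, and in the CW-adaptation one proceeds by induction on $\#\cA$, with the inductive step invoking standard homotopy invariance under elementary collapse plus a careful bookkeeping of boundary incidences along gradient paths.
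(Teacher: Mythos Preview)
The paper does not supply a proof of this proposition; it is quoted verbatim from \cite[Proposition 1.2]{BW} as a preliminary result, so there is no in-paper argument to compare against. Your sketch is a reasonable outline of the standard Forman--Chari construction that \cite{BW} itself invokes: acyclicity of $G_X^{\cA}$ yields a linear extension along which matched pairs can be cancelled one at a time, each cancellation being an elementary collapse (here one uses regularity of $X$ to guarantee the attaching map of $\sigma$ restricts to a homeomorphism on the open face $\sigma'$), and the attaching maps of the resulting complex are described by gradient paths. Your grading argument is also the right one.

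One point deserves more care than you give it. You write that a source of $G_X^{\cA}$ restricted to surviving matched cells furnishes a collapsible pair, but what you actually need at each stage is that the higher-dimensional cell $\sigma$ of some matched pair is \emph{maximal} in the current subcomplex of yet-uncollapsed cells (so that removing $\sigma$ and its matched face $\sigma'$ is a genuine elementary collapse). This does follow from acyclicity, but the bookkeeping is the opposite of what ``source'' suggests: one should look for a matched pair $(\sigma,\sigma')$ such that no reversed edge leaves $\sigma$ and no ordinary edge enters $\sigma$ from a surviving higher cell---equivalently, $\sigma$ is a sink among surviving cells of dimension $\geq \dim\sigma$. Apart from this orientation of the induction, your outline matches the argument underlying \cite{BW}.
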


In the theory of cellular resolutions, we have the following consequence.

\begin{theorem}[{\cite[Theorem 1.3]{BW}}]
Let $I\subseteq R=\kk[x_1,\dots, x_n]$ be a monomial ideal. Assume that $(X,gr)$ is a regular $\bZ^n$-graded
 CW-complex that defines a cellular resolution $\mathbb{F}_{\bullet}^{(X,gr)}$ of $R/I$. Then,  for
a homogeneous acyclic  matching $\cA$ on $G_X$, the $\bZ^n$-graded CW-complex $(X_{\cA},gr)$ supports a cellular
resolution $\mathbb{F}_{\bullet}^{(X_{\cA},gr)}$ of $R/I$.
\end{theorem}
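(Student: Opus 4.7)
The plan is to use the standard acyclicity criterion for cellular resolutions: a $\bZ^n$-graded CW-complex $(Y,gr)$ with vertex labels equal to the $m_i$'s supports a cellular free resolution of $R/I$ if and only if, for every $\alpha\in\bZ^n$, the subcomplex $Y_{\leq\alpha}:=\{\sigma\in Y^{(\ast)}\mid gr(\sigma)\leq\alpha\}$ is either empty or $\kk$-acyclic (this is the classical criterion of Bayer--Peeva--Sturmfels, reformulated for CW-complexes in \cite{BS}, \cite{JW}). Since we already know by hypothesis that $(X,gr)$ satisfies this property, our task reduces to showing that the reduced complex $(X_{\cA},gr)$ inherits it.

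First I would observe that homogeneity of $\cA$ makes the matching compatible with the filtration $\{X_{\leq\alpha}\}_{\alpha\in\bZ^n}$. Indeed, if $\sigma\lra\sigma'\in\cA$, then $gr(\sigma)=gr(\sigma')$, so $\sigma\in X_{\leq\alpha}$ if and only if $\sigma'\in X_{\leq\alpha}$. In particular the restriction
\[
\cA_{\leq\alpha}:=\cA\cap E_{X_{\leq\alpha}}
\]
is a well-defined matching on $X_{\leq\alpha}$, and it is again acyclic since $G_{X_{\leq\alpha}}^{\cA_{\leq\alpha}}$ is a subgraph of $G_{X}^{\cA}$ and a subgraph of an acyclic directed graph is acyclic. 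It is also homogeneous by construction.

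Next I would apply Proposition 1.2 twice: once to the pair $(X,\cA)$, obtaining a $\bZ^n$-graded CW-complex $(X_{\cA},gr)$ homotopy equivalent to $X$ and whose $i$-cells are in bijection with the $\cA$-critical $i$-cells of $X$; and once to each pair $(X_{\leq\alpha},\cA_{\leq\alpha})$, obtaining a CW-complex $(X_{\leq\alpha})_{\cA_{\leq\alpha}}$ homotopy equivalent to $X_{\leq\alpha}$. The key identification is then
\[
(X_{\leq\alpha})_{\cA_{\leq\alpha}}\;=\;(X_{\cA})_{\leq\alpha},
\]
which holds because the $\cA$-critical cells of $X$ lying in $X_{\leq\alpha}$ are exactly the $\cA_{\leq\alpha}$-critical cells of $X_{\leq\alpha}$, and homogeneity guarantees that the grading on $X_{\cA}$ restricted to these cells coincides with the one inherited on $(X_{\leq\alpha})_{\cA_{\leq\alpha}}$. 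Combining this with the homotopy equivalence $X_{\leq\alpha}\simeq (X_{\leq\alpha})_{\cA_{\leq\alpha}}$ and with the fact that $X_{\leq\alpha}$ is empty or $\kk$-acyclic, I conclude that $(X_{\cA})_{\leq\alpha}$ is empty or $\kk$-acyclic. The criterion from \cite{BS}, \cite{JW} then yields that $(X_{\cA},gr)$ supports a cellular free resolution of $R/I$, and the differentials are given by the cellular incidence numbers of $X_{\cA}$ as in the definition of cellular resolution.

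The main technical obstacle is proving the identification $(X_{\leq\alpha})_{\cA_{\leq\alpha}}=(X_{\cA})_{\leq\alpha}$ at the level of CW-structures (not just at the level of underlying cell sets), because the attaching maps of $X_{\cA}$ produced by Forman's discrete Morse theory are built out of gradient paths in $G_{X}^{\cA}$ and one needs to check that every such gradient path starting at a critical cell in $X_{\leq\alpha}$ stays inside $X_{\leq\alpha}$. This however follows again from homogeneity of $\cA$, since each reversed edge preserves the grading while each direct edge weakly decreases it, so gradient paths can only move to cells of grading $\leq\alpha$. Once this compatibility is in place, the rest is a routine application of the acyclicity criterion.
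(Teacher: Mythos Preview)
The paper does not give its own proof of this statement; it is quoted from \cite[Theorem~1.3]{BW} as a preliminary result and used as a black box. Your argument is correct and is essentially the one carried out in \cite{BW}: restrict the homogeneous acyclic matching to each sublevel complex $X_{\leq\alpha}$, apply the Morse homotopy equivalence there, identify $(X_{\leq\alpha})_{\cA_{\leq\alpha}}$ with $(X_{\cA})_{\leq\alpha}$ via the observation that gradient paths weakly decrease the grading, and conclude with the Bayer--Sturmfels acyclicity criterion.
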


\vskip 2mm

The differentials of the cellular resolution
$\mathbb{F}_{\bullet}^{(X_{\cA},gr)}$ can be explicitely described
in terms of the  differentials of $\mathbb{F}_{\bullet}^{(X,gr)}$
(see \cite[Lemma 7.7]{BW}) as we will see later in Example \ref{ex3gens}.
Since in this paper we are mainly concerned by the Betti numbers, we will not write down the differentials in general.

\subsection{Algebraic discrete Morse theory}

One of the main inconveniences of discrete Morse theory
is that the CW-complex $(X_{\cA},gr)$ that we obtain for a given homogeneous acyclic matching $\cA$
is not necessarily regular. Therefore, we cannot always iterate the procedure.
To overcome such an obstacle, one may use {\it algebraic discrete Morse theory} developed independently by
 Sk\"oldberg \cite{Sko} and J\"ollenbeck and Welker \cite{JW}.

 \vskip 2mm

This approach works directly with an initial free resolution $\mathbb{F}_{\bullet}$
without paying attention whether it has a cellular structure or not.
Given a basis $X=\bigcup_{i\geq 0} X^{(i)}$ of the corresponding free modules $F_i$,
we may consider the directed graph $G_X$ on the set of basis elements with the corresponding
set of edges $E_X$. Then, we may follow the same constructions as in the previous subsection.
Namely, we may define an acyclic matching $\cA \subseteq E_X$ (see \cite[Definition 2.1]{JW})
but, in this case, we have to make sure that the coefficient $[\sigma : \sigma']$ in the differential
corresponding to an edge $\sigma \lra \sigma' \in \cA$ is a unit. We consider the $\cA$-critical
basis elements $X_\cA$ and construct a free resolution $\mathbb{F}_{\bullet}^{X_\cA}$ that is
homotopically equivalent to $\mathbb{F}_{\bullet}$ (see \cite[Theorem 2.2]{JW}).

\section{Pruning the Taylor resolution} \label{prune}

In \cite{BW}, the Lyubeznik resolution is obtained  from the Taylor resolution through discrete Morse theory
by detecting a suitable homogeneous acyclic matching $\cA$ on the simplicial complex $X_{\tt Taylor}$ such that $X_{\tt Lyub}=X_\cA$.
In this section, we use a similar approach to provide some new cellular free resolutions for
monomial ideals. We should point out that the framework considered in
\cite{BW} is slightly more general. To keep notation as simple as possible,
we decided to stick to the case of monomial ideals in a polynomial ring.

\vskip 2mm

\subsection{A cellular free resolution} \label{pruned1}

Let $I=\langle m_1,\dots, m_r \rangle \subseteq R$ be a monomial ideal.
Our starting point is the Taylor resolution $\mathbb{F}_{\bullet}^{(X_{\tt Taylor}, gr)}$.
This resolution is, in general, far from being minimal. In other words, the directed graph $G_{X_{\tt Taylor}}$ associated to ${X_{\tt Taylor}}$
contains a lot of unnecessary information.  Our goal is to prune  this
excess of information in a very simple way. More precisely, we give an algorithm that produces, at each step, a homogeneous
acyclic matching on  ${X_{\tt Taylor}}$. Using
discrete Morse theory, this will provide a cellular free resolution of $R/I$.
It will not be minimal in general, but it will be smaller than the Lyubeznik resolution.

\vskip 2mm

\begin{algorithm}\label{alg1} { (Pruned resolution)}

\vskip 2mm

{\rm \noindent {\sc Input:} The set of edges $E_{X_{\tt Taylor}}$.

\vskip 2mm

For $j$ from $1$ to $r$, incrementing by $1$

\vskip 2mm

\begin{itemize}

\item[\textbf{(j)}] ${\it Prune}$ the edge ${\sigma + \varepsilon_j} \lra{\sigma }$ for
all $\sigma \in \{0,1\}^r$ such that $\sigma_j=0$, where `prune'
means remove the edge\footnote{When we remove an edge, we also remove its two vertices and all the edges
passing through these two vertices.}
if it survived after step $(j-1)$ and
$gr(\sigma)=gr(\sigma + \varepsilon_j) $.

\end{itemize}

\vskip 2mm

\noindent {\sc Return:} The set $\cA_P$ of edges that have been pruned.

}

\end{algorithm}

\begin{example}\label{ex3gens}
When $r=3$, we can visualize the steps of the algorithm over the directed graph as follows:

 {\tiny
$${\xymatrix { &(1,1,1) \ar[dl] \ar[d] &
\\ (1,1,0) \ar[d]  & (1,0,1) \ar[dl]|\hole & (0,1,1) \ar@2{->}[ul] \ar[d] \ar[dl]
\\ (1,0,0)  &(0,1,0)  \ar@2{->}[ul]    & (0,0,1) \ar@2{->}[ul]| \hole }}
\hskip .41cm
{\xymatrix { &(1,1,1) \ar[dl] \ar[dr] &
\\ (1,1,0) \ar[dr]  & (1,0,1) \ar[dl]|\hole \ar[dr]|\hole \ar@2{->}[u] & (0,1,1) \ar[dl]
\\ (1,0,0) \ar@2{->}[u] &(0,1,0) & (0,0,1) \ar@2{->}[u]  }}
\hskip .41cm
{\xymatrix { &(1,1,1) \ar[dr] \ar[d] &
\\ (1,1,0)  \ar@2{->}[ur]  \ar[d] \ar[dr] & (1,0,1)  \ar[dr]|\hole & (0,1,1)  \ar[d]
\\ (1,0,0) \ar@2{->}[ur]|\hole &(0,1,0)  \ar@2{->}[ur]  &(0,0,1) }}
$$}
The double arrows indicate the direction of the pruning step,
that is, the arrows that will be pruned at each step if the degree of their two vertices coincide
(and if they have not been pruned at a previous step).

Let's illustrate how our algorithm works with an easy example: the edge ideal associated to the 3-cycle,
$I=\langle x_1x_2,x_2x_3,x_1x_3\rangle\subset R=\kk [x_1,x_2,x_3]$. The Taylor resolution of $I$ is
\begin{eqnarray*}
0\rightarrow R
\xrightarrow{\tiny\begin{pmatrix} 1\\ -1\\ 1\end{pmatrix}}
R^3
\xrightarrow{\tiny\begin{pmatrix} x_3&x_3&0\\ -x_1&0&x_1\\ 0&-x_2&-x_2\end{pmatrix}}
R^3
\xrightarrow{\tiny\begin{pmatrix}  x_1x_2 & x_2x_3 & x_1x_3\end{pmatrix}}
I\rightarrow 0&.
\end{eqnarray*}
The directed graph $G_{X_{\tt Taylor}}$ associated to this resolution is 
{\tiny
$$
{\xymatrix { &x_1x_2x_3 \ar[dl] \ar[d] \ar[dr]&
\\ x_1x_2x_3 \ar[d] \ar[dr] & x_1x_2x_3 \ar[dl]|\hole \ar[dr]|\hole & x_1x_2x_3  \ar[d] \ar[dl]
\\ x_1x_2  &x_2x_3 &x_1x_3 }}
$$}where, in order to identify clearly which edges can be pruned, we label the vertices by the monomial $m_\sigma$ instead of $\sigma$.
At the first step of the algorithm  we prune  the arrow $(0,1,1)\Rightarrow (1,1,1)$
as indicated in the first graph below.  Once we remove this edge, we observe on the second graph that no more arrows can be pruned at the next two steps.

{\tiny
$$
{\xymatrix { &x_1x_2x_3 \ar[dl] \ar[d] &
\\ x_1x_2x_3 \ar[d] \ar[dr] & x_1x_2x_3 \ar[dl]|\hole \ar[dr]|\hole & x_1x_2x_3 \ar@2{->}[ul] \ar[d] \ar[dl]
\\ x_1x_2  &x_2x_3 &x_1x_3 }}
\hskip 1.41cm
{\xymatrix { & &
\\ x_1x_2x_3 \ar[d] \ar[dr] & x_1x_2x_3 \ar[dl]|\hole \ar[dr] & 
\\ x_1x_2  &x_2x_3&x_1x_3 }}
$$}

\noindent
The pruned resolution, which is minimal in this case, is as follows:
\begin{eqnarray*}
0\rightarrow 
R^2
\xrightarrow{\tiny\begin{pmatrix} x_3&x_3\\ -x_1&0\\ 0&-x_2\end{pmatrix}}
R^3
\xrightarrow{\tiny\begin{pmatrix}  x_1x_2 & x_2x_3 & x_1x_3\end{pmatrix}}
I\rightarrow 0&.
\end{eqnarray*}
\end{example}

The main result in this section is Theorem \ref{T1} where we show that the output $\cA_P$ of Algorithm \ref{alg1} is
a homogeneous matching such that $(X_{\cA_P},gr)$ supports a cellular free resolution of $R/I$.
In order to do so, we will first prove that each step of Algorithm \ref{alg1}, produces a homogeneous acyclic matching.
In other words, pruning in a fixed direction gives a new smaller free resolution.

\begin{proposition}\label{step}
Let $ E\subseteq E_{X_{\tt Taylor}}$ be a subset of edges and $\cA_j \subseteq E$ be the set of pruned edges in the direction of $\varepsilon_j$. 
Namely, $\cA_j$ are the edges ${\sigma + \varepsilon_j} \lra{\sigma }$ in  $E$ such that $\sigma_j=0$ and  $gr(\sigma)=gr(\sigma + \varepsilon_j) $. 
Then $\cA_j$ is a homogeneous acyclic matching.
\end{proposition}

\begin{proof}
In the graph $G_{X_{\tt Taylor}}$, every vertex $\sigma'$ is involved in exactly one edge of the form ${\sigma + \varepsilon_j} \lra{\sigma }$:
$\sigma'=\sigma$ if $\sigma'_j=0$ and $\sigma'=\sigma+ \varepsilon_j$ otherwise. Thus, $\cA_j$ is a matching.
It is acyclic since we are only reversing arrows in a fixed direction and it is impossible to construct a cycle in $G_{X_{\tt Taylor}}$
with this restriction. It is homogeneous by construction of $\cA_j$ in our pruning step.
\end{proof}

As a direct consequence, we get our desired cellular free resolution.

\begin{theorem} \label{T1} 
Let $I \subseteq R=\kk[x_1, \ldots, x_n]$ be a monomial ideal and
$\cA_P \subseteq E_{X_{\tt Taylor}}$ be the set of pruned edges obtained using Algorithm \ref{alg1}.
Then, the $\bZ^n$-graded CW-complex $(X_{\cA_P},gr)$
supports a cellular free resolution $\mathbb{F}_{\bullet}^{(X_{\cA_P},gr)}$ of $R/I$.
\end{theorem}

\begin{remark}
The free resolution $\mathbb{F}_{\bullet}^{(X_{\cA_P},gr)}$, like the Lyubeznik resolution, strongly depends on the order of the
generators of the monomial ideal $I$.
In general, it is neither simplicial (while the Lyubeznik resolution is always simplicial) nor minimal.
\end{remark}

A nice feature about the pruning algorithm is that
we do not need to care if the original system of generators of the monomial ideal
is minimal or not.
Roughly speaking, the pruning algorithm will always remove the
excess of information given by the extra generators.

\vskip 2mm

\begin{lemma}\label{minimal}
Let $I =\langle m_1,\dots, m_s \rangle \subseteq R$ be a monomial
ideal, and let $\{m_{i_1},\dots, m_{i_r} \}$ be its minimal set of generators
with $1\leq {i_1} < \cdots < {i_r} \leq s$.
Let $X_s$ and $X_r$
be the Taylor simplicial complexes associated to these two sets of
generators\footnote{We have that $X_r$ is a
subcomplex of $X_s$.} and  $\cA_P^s \subseteq E_{X_s}$, $\cA_P^r \subseteq
E_{X_r}$ be  the sets of pruned edges obtained by applying Algorithm
\ref{alg1} to each case. Then, there is an isomorphism of
$\bZ^n$-graded CW-complexes $(X_{\cA_P^r},gr) \cong
(X_{\cA_P^s},gr)$. In particular, both sets of generators lead to
the same cellular free resolution of $R/I$ in Theorem \ref{T1}.
\end{lemma}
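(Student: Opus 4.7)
The plan is to argue by induction on the number $s-r$ of redundant generators. The base case $s=r$ is immediate, since the two algorithms coincide. For the inductive step, it suffices to show that if $m_t$ is a redundant generator (so there exists a minimal $m_k$, $k\neq t$, with $m_k\mid m_t$), then Algorithm \ref{alg1} applied to $\{m_1,\dots,m_s\}$ and to $\{m_1,\dots,\widehat{m_t},\dots,m_s\}$ produce isomorphic $\bZ^n$-graded CW-complexes.

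The cells of $X_s$ naturally split into two copies of $X_{s-1}$ depending on whether $\sigma_t=0$ or $\sigma_t=1$. The divisibility $m_k\mid m_t$ is the key tool: for every $\sigma\in X_s$ with $\sigma_t=1$, one of the two edges through $\sigma$ in direction $\varepsilon_k$ has matching grading -- namely $\sigma-\varepsilon_k\to\sigma$ if $\sigma_k=1$, and $\sigma\to\sigma+\varepsilon_k$ if $\sigma_k=0$ -- because $m_k\mid m_t\mid m_\sigma$. This local observation suggests that cells with $\sigma_t=1$ should all be matched, and thus be non-critical, in a manner controlled by the pairings at step $k$ (together with edges created and destroyed at step $t$).

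The main difficulty I expect is globalizing this local picture: the algorithm processes positions $1,2,\dots,s$ in order, and the natural pairings at step $k$ may be preempted by earlier steps that match one of the endpoints against a different cell. Consequently, the naive inclusion $X_{s-1}\hookrightarrow X_s$ obtained by padding with $\sigma_t=0$ does \emph{not} in general send critical cells to critical cells: a critical cell of $X_s$ may very well have $\sigma_t=1$, namely when its would-be counterpart with $\sigma_t=0$ was already removed at an earlier step through a pairing involving another minimal generator. The bijection between critical cells of the two algorithms has therefore to be defined via the common multi-graded label, sending each critical cell $\sigma$ of $X_s$ to the unique critical cell of $X_{s-1}$ of the same dimension with label $m_\sigma$. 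Establishing that this label-based map is well-defined and bijective will require a case analysis paralleling the proofs of properties (i)--(iii) used in Theorem \ref{T1}, together with an exchange argument that, whenever $m_t$ appears in the support of a critical cell of $X_s$, trades $m_t$ for a suitable selection of minimal generators with the same lcm.

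Finally, upgrading this bijection of critical cells to a genuine isomorphism of $\bZ^n$-graded CW-complexes would follow from the explicit description of the induced differentials on the reduced complex in terms of Morse paths \cite[Lemma 7.7]{BW}: Morse paths in $X_s$ connecting two critical cells descend, under the label-based identification, to Morse paths in $X_{s-1}$ connecting the corresponding critical cells, so that the two sets of attaching maps agree.
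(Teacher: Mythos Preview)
The paper takes a shorter route than yours: for each redundant $m_j$, it fixes a minimal $m_{i_k}$ dividing $m_j$ and argues that by step $(i_k)$ of Algorithm~\ref{alg1} every cell with $\sigma_j=1$ has been pruned. If this holds, the critical cells of $X_s$ all lie in the image of the naive embedding $X_r\hookrightarrow X_s$, and the isomorphism follows. You explicitly reject this route, asserting that a critical cell of $X_s$ can have $\sigma_t=1$ for redundant $m_t$. Your suspicion is justified: with $m_1=x_1x_2$, $m_2=x_2x_3$, $m_3=x_1$ (so $m_1$ is redundant, with $m_3\mid m_1$), the cell $(1,1,0)$ is critical in $X_s$, because its intended partner $(1,1,1)$ at step~$(3)$ was already removed at step~$(1)$ via the edge $(0,1,1)\to(1,1,1)$ --- a situation (the partner disappearing as the \emph{larger} end of an earlier matched edge) that the paper's brief argument does not address. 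So your grading-based bijection is not over-engineering; something beyond the naive inclusion is genuinely needed.

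That said, what you have written is a plan, not a proof. You assert that the label-based map between critical cells is a well-defined bijection and that Morse paths transport along it, but you do not carry out either verification. It is not a priori clear, for instance, why two distinct critical cells of the same dimension in $X_s$ cannot carry the same label $m_\sigma$ (the pruned resolution is allowed to be non-minimal), nor why every critical cell of $X_{s-1}$ is hit. To complete your argument you must actually execute the exchange you allude to: show concretely that if $\sigma$ is critical in $X_s$ with $\sigma_t=1$, then the cell of $X_{s-1}$ obtained by deleting coordinate $t$ from $\sigma+\varepsilon_k$ (when $\sigma_k=0$) or from $\sigma$ (when $\sigma_k=1$) is critical in $X_{s-1}$, that this assignment preserves dimension and grading, and that it is inverse to the analogous map in the other direction. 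As it stands, you have correctly located the difficulty but not resolved it.
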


\begin{proof}
Assume that $m_j$
is a generator of $I$ that does not belong to the minimal generating set $\{m_{i_1},\dots,
m_{i_r} \}$. We want to check that all the vertices
$\sigma=(\sigma_1,\dots,\sigma_s)$ with $\sigma_j=1$ in the graph
associated to the Taylor complex $X_s$ are pruned using Algorithm
\ref{alg1}.

\vskip 2mm

There exists a minimal generator $m_{i_k}$ such that $m_{i_k} |
m_j$. Therefore, at  step $(i_k)$ of Algorithm \ref{alg1}, we must
prune all the edges $\sigma + \varepsilon_{i_k} \lra \sigma$
with $\sigma_j=1$ and $\sigma_{i_k}=0$
that have survived in the previous steps.  In the case that the edge $\sigma + \varepsilon_\ell
+\varepsilon_{i_k} \lra \sigma  +\varepsilon_{i_k}$ has been pruned
at a previous step $(\ell)$ of the algorithm,  then $gr(\sigma + \varepsilon_\ell
+\varepsilon_{i_k}) = gr(\sigma  +\varepsilon_{i_k}) = gr(\sigma)$ and thus $gr(\sigma)=gr(\sigma + \varepsilon_\ell)$ so
the edge $\sigma + \varepsilon_\ell  \lra
\sigma  $ has to be pruned at the same step $(\ell)$. In particular neither the vertex $\sigma$ nor the vertex $\sigma + \varepsilon_{i_k}$
survive at this previous step $(\ell)$ of the algorithm.
\end{proof}

\subsection{A simplicial free resolution} \label{simplicial1}
If $\cA_P$ is the output of Algorithm \ref{alg1},  the cellular complex $X_{\cA_P}$ may not be simplicial, that is, it may not satisfy the property that
given $\tau \in X_{\cA_P}$, then $\sigma\in X_{\cA_P}$ for any $\sigma \leq \tau$.
In other words,
the pruned free resolution $\mathbb{F}_{\bullet}^{(X_{\cA_P},gr)}$
obtained in Theorem \ref{T1} is always cellular but it may not be simplicial.
If we choose carefully the edges that we prune in  Algorithm \ref{alg1} in order to preserve this property,  we will obtain a
simplicial free resolution of $R/I$ that will be bigger, in general, than the one in Theorem \ref{T1}.

\vskip 2mm

\begin{algorithm}\label{alg3} { (Simplicial pruned resolution)}

\vskip 2mm

{\rm \noindent {\sc Input:} The set of edges $E_{X_{\tt Taylor}}$.

\vskip 2mm

For $j$ from $1$ to $r$, incrementing by $1$

\vskip 2mm

\begin{itemize}

\item[\textbf{(j)}] ${\it Prune}$ the edge ${\sigma} \lra{\sigma + \varepsilon_j}$ for
all $\sigma \in \{0,1\}^r$ such that $\sigma_j=0$ , where `prune'
means remove the edge if it survived after step $(j-1)$,
$gr(\sigma)=gr(\sigma + \varepsilon_j) $ and no
face $\tau > \sigma$ survives at this step $(j)$.

\end{itemize}}

\vskip 2mm

\noindent {\sc Return:} The set $\cA_S$ of edges that have been pruned.

\end{algorithm}

Using the same idea as in Proposition \ref{step}, we have that each step of Algorithm \ref{alg3} produces 
a homogeneous acyclic matching on  $X_{\tt Taylor}$
and hence, the free resolution
$\mathbb{F}_{\bullet}^{(X_{\cA_S},gr)}$ is a simplicial free
resolution of the monomial ideal $I$ if $\cA_S$ is the output of Algorithm \ref{alg3}.

\begin{remark}\label{rkIterate}
It could be the case that the simplicial pruned resolution that we obtain with Algorithm \ref{alg3} may admit further refinement.
This happens when an edge ${\sigma + \varepsilon_j} \lra{\sigma }$,
that would be pruned at step $(j)$ of Algorithm \ref{alg1}, may not be pruned at step $(j)$ of Algorithm \ref{alg3}
because there is a face $\tau > \sigma$ that survives at this step but this face $\tau$ is
pruned in a posterior step.  In this case 
we may apply Algorithm \ref{alg3} once again, with the set of edges $E_{X_{\cA_S}}$ as input instead of $E_{X_{\tt Taylor}}$,
in order to prune the edge ${\sigma + \varepsilon_j } \lra{\sigma }$. Indeed we may repeat Algorithm \ref{alg3} until we have nothing left to prune.
We will observe this phenomenon later in Example \ref{ex5path5cycle}
contructing the simplicial pruned resolution of the 5-cycle.

\end{remark}

\subsection{The Lyubeznik resolution revisited}

The Lyubeznik resolution can be also obtained from the Taylor resolution using our pruning algorithm.
In this case, the edges of the Taylor complex  $X_{\tt Taylor}$ that we prune are obtained
using the following:

\begin{algorithm}\label{alg2} { (The Lyubeznik resolution via pruning)}

\vskip 2mm

{\rm \noindent {\sc Input:} The set of edges $E_{X_{\tt Taylor}}$.

\vskip 2mm

For $j$ from $1$ to $r$, incrementing by $1$

\vskip 2mm

\begin{itemize}

\item[\textbf{(j)}] ${\it Prune}$ the edge ${\sigma} \lra{\sigma + \varepsilon_j}$ for
all $\sigma \in \{0,1\}^r$ such that $\sigma_i=0$ for all $i\leq j$,
where `prune' means remove the edge if it survived after step $(j-1)$
and $gr(\sigma)=gr(\sigma + \varepsilon_j) $.

\end{itemize}}

\vskip 2mm

\noindent {\sc Return:} The set $\cA_L$ of edges that have been pruned.
\end{algorithm}

As in Proposition \ref{step}, each step of Algorithm \ref{alg2} produces a homogeneous acyclic matching on $X_{\tt Taylor}$
and since $X_{\cA_L}=X_{\tt Lyub}$, this provides a simple proof of the result in
\cite[\S3]{BW} that shows that the Lyubeznik resolution can be obtained using discrete Morse theory. 
We also have that the Lyubeznik resolution is
simplicial because the pruning that we consider in Algorithm \ref{alg2}, as the one in Algorithm \ref{alg3},
preserves the simplicial property. In this sense, we may
understand the pruned resolution given in Theorem \ref{T1} and the
simplicial pruned resolution given in Subsection \ref{simplicial1}, as refinements of the Lyubeznik resolution.
Finally, we mention that generalizations of Lyubeznik resolutions have been studied by Novik in \cite{Nov}
using a completely different approach.

\subsection{Some variants of the pruning algorithm} \label{versions}
The methods developed in this work can be extended in several different directions.
The aim of this subsection is to present some of them.

\vskip 2mm

$\cdot$ {\it General setup:} Batzies and Welker \cite{BW} use a slightly more general framework
for discrete Morse theory than the one considered in this paper. The interested reader
should be able to adapt our pruning algorithm to their framework.

\vskip 2mm

$\cdot$ {\it Partial pruning:} Notice that the acyclic matchings considered in Algorithms \ref{alg1}, \ref{alg3}
and \ref{alg2} satisfy $\cA_P \supseteq \cA_S \supseteq \cA_L.$ One may also consider any convenient subset
$\cA_P \supseteq \cA'$ of pruning edges. Indeed, we may consider
many different variants using algebraic discrete Morse theory iteratively.
We only have to  pick a convenient edge at each iteration.

\vskip 2mm

$\cdot$ {\it Pruning other resolutions:} The method that we present here always starts with the Taylor resolution but
 we may start with other non-minimal free resolutions, the Lyubeznik resolution for example.
The advantage of the Taylor resolution is that it does not depend on the order of the generators, and
the simplicity of its construction makes our algorithm very easy to present and implement.

\vskip 2mm

$\cdot${\it $\nu$-invariants:}  A new set of invariants that measure the acyclicity of the linear strands of a minimal free
resolution of a graded ideal was introduced in \cite{AY}.  We may obtain an approximation to these invariants by applying
the following iteration of the pruning algorithm. We first apply Algorithm \ref{alg1} to obtain the
CW-complex $X_{\cA_P}$ and its corresponding free resolution. Then, we apply  the pruning algorithm to
$X_{\cA_P}$ with the following variant:

\vskip 2mm

\begin{itemize}

\item[\textbf{(j)}] ${\it Prune}$ the edge ${\sigma + \varepsilon_j} \lra{\sigma }$ for
all $\sigma \in \{0,1\}^r$ such that $\sigma_j=0$, where `prune'
means remove the edge if it survived after step $(j-1)$ and
$gr(\sigma)=gr(\sigma + \varepsilon_j) -1 $.
Here $gr(\sigma)=|\alpha| \in \bZ$ where ${\bf x}^\alpha=m_\sigma$.

\end{itemize}

\section{Examples} \label{examples}

In this section, we will illustrate that the
pruned free resolution described in Section \ref{pruned1} is fairly
close to the minimal free resolution in some examples.
Indeed, we will compare the pruned resolution to the
simplicial free resolution obtained in Section \ref{simplicial1} and to the
Lyubeznik resolution by means of their corresponding Betti diagram.

\vskip 2mm

\subsection{First examples}

Already for some simple examples we can appreciate the better behavior of the pruned
resolutions with respect to the Lyubeznik resolution.

\begin{example}\label{ex5path5cycle}
We are going to describe the steps of the pruning Algorithms described in Section \ref{prune}
for the edge ideals associated to a $5$-path and a $5$-cycle:

\vskip 2mm

$\bullet$ Let  $I=\langle {x}_{1} {x}_{2}, {x}_{2} {x}_{3}, {x}_{3} {x}_{4}, {x}_{4} {x}_{5}\rangle$ be the edge ideal
of a $5$-path. The steps performed with Algorithm \ref{alg1} are the following:

\vskip 2mm

\noindent $\cdot$ {\bf Step (1):} No edge is pruned.

\noindent $\cdot$ {\bf Step (2):} We prune the edges $(1,0,1,0) \Rightarrow (1,1,1,0)$ and $(1,0,1,1) \Rightarrow (1,1,1,1)$.

\noindent $\cdot$ {\bf Step (3):} We prune the edge $(0,1,0,1) \Rightarrow (0,1,1,1)$.

\noindent $\cdot$ {\bf Step (4):} No edge is pruned.

\vskip 2mm

The two edges pruned in Step (2) are also pruned using Algorithm \ref{alg3} but are
not pruned using Algorithm \ref{alg2}. The edge pruned in Step (3) is not pruned neither
using Algorithm \ref{alg3} nor Algorithm \ref{alg2}. Therefore, the Betti diagrams of the pruned, the simplicial pruned,
and the Lyubeznik resolutions
are  respectively:

\vskip 2mm

\begin{center}
{\small $\begin{matrix}
&0&1&2&3\\\text{total:}&1&4&4&1\\\text{0:}&1&\text{.}&\text{.}&\text{.}\\\text{1:}&\text{.}&4&3&\text{.}\\\text{2:}&\text{.}&\text{.}&1&1\\\end{matrix}
$
\hfill
$\begin{matrix}
&0&1&2&3\\
\text{total:}&1&4&5&2\\
\text{0:}&1&\text{.}&\text{.}&\text{.}\\
\text{1:}&\text{.}&4&3&1\\
\text{2:}&\text{.}&\text{.}&2&1\\
\end{matrix}$
\hfill
$\begin{matrix}
&0&1&2&3&4\\
\text{total:}&1&4&6&4&1\\
\text{0:}&1&\text{.}&\text{.}&\text{.}&\text{.}\\
\text{1:}&\text{.}&4&3&2&1\\
\text{2:}&\text{.}&\text{.}&3&2&\text{.}\\
\end{matrix}$
}
\hskip 1cm
\end{center}
The pruned resolution is minimal. The simplicial pruned resolution is not minimal but it is as small
as possible since the ideal $I$ has no minimal simplicial resolution as one can check using an argument
similar to the one used in \cite[Section 3.2]{Jac04} for the 4-cycle. The Lyubeznik resolution coincides with the Taylor
resolution since no edge has been pruned through Algorithm \ref{alg2}.

\vskip 2mm

$\bullet$  Let $I=\langle {x}_{1} {x}_{2}, {x}_{2} {x}_{3}, {x}_{3} {x}_{4}, {x}_{4} {x}_{5}, {x}_{5} {x}_{1}\rangle$
be the edge ideal of a $5$-cycle. The steps performed with Algorithm \ref{alg1} are the following:

\vskip 2mm

\noindent $\cdot$ {\bf Step (1):} We prune four edges $(0,1,0,0,1) \Rightarrow (1,1,0,0,1)$,
$(0,1,1,0,1) \Rightarrow (1,1,1,0,1)$, $(0,1,0,1,1) \Rightarrow (1,1,0,1,1)$ and $(0,1,1,1,1) \Rightarrow (1,1,1,1,1)$.

\noindent $\cdot$ {\bf Step (2):} We prune  $(1,0,1,0,0) \Rightarrow (1,1,1,0,0)$ and $(1,0,1,1,0) \Rightarrow (1,1,1,1,0)$.

\noindent $\cdot$ {\bf Step (3):} We prune the edge $(0,1,0,1,0) \Rightarrow (0,1,1,1,0)$.

\noindent $\cdot$ {\bf Step (4):}  We prune  $(0,0,1,0,1) \Rightarrow (0,0,1,1,1)$ and $(1,0,1,0,1) \Rightarrow (1,0,1,1,1)$.

\noindent $\cdot$ {\bf Step (5):} We prune the edge $(1,0,0,1,0) \Rightarrow (1,0,0,1,1)$.

\vskip 2mm

The edges pruned at Step (1) are also pruned using Algorithm \ref{alg3} and Algorithm \ref{alg2}.
The two edges pruned at Step (2) can not be pruned  using Algorithm \ref{alg3} because the vertices
$(1,0,1,0,1)$ and $(1,0,1,1,1)$ remain at this step of the algorithm. The edges pruned at Step (3)
and (4) are also pruned in Algorithm \ref{alg3}. The edge pruned at Step (5) is not pruned in Algorithm \ref{alg3}.
Notice that the two edges that prohibited the pruning at Step (3) of Algorithm \ref{alg3} were pruned at Step (4).
This means that we can perform another round of Algorithm \ref{alg3} that will prune the two edges that could not be pruned
before at step (2) as observed in Remark \ref{rkIterate}.

\vskip 2mm

On the other hand, note that after the first step, no more edge can be pruned through Algorithm \ref{alg2}.
We thus obtain that the Betti diagrams of the pruned, the simplicial pruned, and the Lyubeznik resolutions are
the following respectively:

\vskip 2mm

\begin{center}
{\small
$\begin{matrix}
&0&1&2&3\\\text{total:}&1&5&5& 1\\\text{0:}&1&\text{.}&\text{.}&\text{.}\\\text{1:}&\text{.}&5&5&\text{.}\\\text{2:}&\text{.}&\text{.}&\text{.}&1\\\end{matrix}
$
\hfill
$\begin{matrix}
&0&1&2&3\\
\text{total:}&1&5&6&2\\
\text{0:}&1&\text{.}&\text{.}&\text{.}\\
\text{1:}&\text{.}&5&5&1\\
\text{2:}&\text{.}&\text{.}&1&1\\
\end{matrix}$
\hfill
$\begin{matrix}
&0&1&2&3&4\\
\text{total:}&1&5&9&7&2\\
\text{0:}&1&\text{.}&\text{.}&\text{.}&\text{.}\\
\text{1:}&\text{.}&5&5&4&2\\
\text{2:}&\text{.}&\text{.}&4&3&\text{.}\\
\end{matrix}$
}
\hskip 1cm
\end{center}
As in the previous example, the pruned resolution is minimal and the simplicial pruned resolution is not minimal but it is as small as possible.
\end{example}

\begin{remark}
It is not surprising that in the previous examples we could find a cellular minimal resolution since both
 the 5-path and the 5-cycle fall into \cite[Example 1.7]{BS}.
\end{remark}

\vskip 2mm

For larger examples the difference  between the pruned and the Luybeznik resolutions
becomes more apparent.

\vskip 2mm

\begin{example}
Consider the ideal
$
I=\langle {x}_{1}^{4}, {x}_{2}^{4}, {x}_{2}^{2} {x}_{3}^{2}, {x}_{3}^{4}, {x}_{4}^{4}, {x}_{1} {x}_{4}^{2} {x}_{5},
{x}_{5}^{4}, {x}_{2}^{2} {x}_{6}^{2}, {x}_{6}^{4}, {x}_{4}^{2} {x}_{7}^{2}, {x}_{7}^{4}\rangle.
$
The pruned resolution is minimal while the Lyubeznik resolution is not. The Betti diagrams are:

\vskip 2mm

\begin{center}
{\tiny $\begin{matrix}
&0&1&2&3&4&5&6&7\\\text{total:}&1&11&49&114&148&107&40&6\\\text{0:}&1&\text{.}&\text{.}&\text{.}&\text{.}&\text{.}&\text{.}&\text{.}\\\text{1:}&\text{.}&\text{.}&\text{.}&\text{.}&\text{.}&\text{.}&\text{.}&\text{.}\\\text{2:}&\text{.}&\text{.}&\text{.}&\text{.}&\text{.}&\text{.}&\text{.}&\text{.}\\\text{3:}&\text{.}&11&\text{.}&\text{.}&\text{.}&\text{.}&\text{.}&\text{.}\\\text{4:}&\text{.}&\text{.}&9&\text{.}&\text{.}&\text{.}&\text{.}&\text{.}\\\text{5:}&\text{.}&\text{.}&2&2&\text{.}&\text{.}&\text{.}&\text{.}\\\text{6:}&\text{.}&\text{.}&38&4&\text{.}&\text{.}&\text{.}&\text{.}\\\text{7:}&\text{.}&\text{.}&\text{.}&56&2&\text{.}&\text{.}&\text{.}\\\text{8:}&\text{.}&\text{.}&\text{.}&10&31&\text{.}&\text{.}&\text{.}\\\text{9:}&\text{.}&\text{.}&\text{.}&42&30&9&\text{.}&\text{.}\\\text{10:}&\text{.}&\text{.}&\text{.}&\text{.}&71&32&1&\text{.}\\\text{11:}&\text{.}&\text{.}&\text{.}&\text{.}&2&37&14&\text{.}\\\text{12:}&\text{.}&\text{.}&\text{.}&\text{.}&12&6&6&2\\\text{13:}&\text{.}&\text{.}&\text{.}&
\text{.}&\text{.}&22&6&\text{.}\\\text{14:}&\text{.}&\text{.}&\text{.}&\text{.}&\text{.}&\text{.}&11&2\\\text{15:}&\text{.}&\text{.}&\text{.}&\text{.}&\text{.}&1&\text{.}&1\\\text{16:}&\text{.}&\text{.}&\text{.}&\text{.}&\text{.}&\text{.}&2&\text{.}\\\text{17:}&\text{.}&\text{.}&\text{.}&\text{.}&\text{.}&\text{.}&\text{.}&1\\\end{matrix}
$ \hskip 1cm   $\begin{matrix}
&0&1&2&3&4&5&6&7&8&9&10\\\text{total:}&1&11&54&156&294&378&336&204&81&19&2\\\text{0:}&1&\text{.}&\text{.}&\text{.}&\text{.}&\text{.}&\text{.}&\text{.}&\text{.}&\text{.}&\text{.}\\\text{1:}&\text{.}&\text{.}&\text{.}&\text{.}&\text{.}&\text{.}&\text{.}&\text{.}&\text{.}&\text{.}&\text{.}\\\text{2:}&\text{.}&\text{.}&\text{.}&\text{.}&\text{.}&\text{.}&\text{.}&\text{.}&\text{.}&\text{.}&\text{.}\\\text{3:}&\text{.}&11&\text{.}&\text{.}&\text{.}&\text{.}&\text{.}&\text{.}&\text{.}&\text{.}&\text{.}\\\text{4:}&\text{.}&\text{.}&9&\text{.}&\text{.}&\text{.}&\text{.}&\text{.}&\text{.}&\text{.}&\text{.}\\\text{5:}&\text{.}&\text{.}&2&7&\text{.}&\text{.}&\text{.}&\text{.}&\text{.}&\text{.}&\text{.}\\\text{6:}&\text{.}&\text{.}&43&4&2&\text{.}&\text{.}&\text{.}&\text{.}&\text{.}&\text{.}\\\text{7:}&\text{.}&\text{.}&\text{.}&58&4&\text{.}&\text{.}&\text{.}&\text{.}&\text{.}&\text{.}\\\text{8:}&\text{.}&\text{.}&\text{.}&12&64&2&\text{.}&\text{.}&\text{.}&\text{.}&\text{.}\\\text{9:}&\text{.}&\text{.}&\text{.}&75&32&
44&\text{.}&\text{.}&\text{.}&\text{.}&\text{.}\\\text{10:}&\text{.}&\text{.}&\text{.}&\text{.}&106&48&22&\text{.}&\text{.}&\text{.}&\text{.}\\\text{11:}&\text{.}&\text{.}&\text{.}&\text{.}&18&114&48&7&\text{.}&\text{.}&\text{.}\\\text{12:}&\text{.}&\text{.}&\text{.}&\text{.}&68&40&77&30&1&\text{.}&\text{.}\\\text{13:}&\text{.}&\text{.}&\text{.}&\text{.}&\text{.}&86&44&42&12&\text{.}&\text{.}\\\text{14:}&\text{.}&\text{.}&\text{.}&\text{.}&\text{.}&10&85&30&18&2&\text{.}\\\text{15:}&\text{.}&\text{.}&\text{.}&\text{.}&\text{.}&34&16&50&10&6&\text{.}\\\text{16:}&\text{.}&\text{.}&\text{.}&\text{.}&\text{.}&\text{.}&33&10&23&2&1\\\text{17:}&\text{.}&\text{.}&\text{.}&\text{.}&\text{.}&\text{.}&2&27&4&6&\text{.}\\\text{18:}&\text{.}&\text{.}&\text{.}&\text{.}&\text{.}&\text{.}&9&2&10&\text{.}&1\\\text{19:}&\text{.}&\text{.}&\text{.}&\text{.}&\text{.}&\text{.}&\text{.}&5&\text{.}&3&\text{.}\\\text{20:}&\text{.}&\text{.}&\text{.}&\text{.}&\text{.}&\text{.}&\text{.}&\text{.}&3&\text{.}&\text{.}\\\text{21:}&\text{.}
&\text{.}&\text{.}&\text{.}&\text{.}&\text{.}&\text{.}&1&\text{.}&\text{.}&\text{.}\\\end{matrix}
$}
\end{center}
\end{example}

\vskip 2mm

\subsection{Independence on the characteristic of the base field}
It is well-known that the Betti numbers of a monomial ideal depend on the characteristic of the base field.
This phenomenon can be understood through Hochster's formula.
Namely, given  a monomial ideal $I\subseteq R=\kk[x_1,\dots,x_n]$,
we may assume that it is squarefree since its Betti numbers can always be obtained from a squarefree monomial ideal using polarization.
Now, a squarefree monomial ideal $I$ is always the Stanley-Reisner ideal of a simplicial complex $\Delta$.
For any squarefree degree $\alpha \in \{0,1\}^n$, we consider
$F_\alpha:=\{x_i \hskip 2mm | \hskip 2mm \alpha_i \neq 0 \} \subseteq \{x_1,\dots,x_n\}$.
Let $\Delta_{|F_\alpha}$ be the simplicial complex obtained by taking all the faces of $\Delta$
whose vertices belong to $F_\alpha$. Hochster's formula states that the Betti numbers of $R/I$ can be computed
as the dimensions of reduced simplicial homology groups:
$$\beta_{i,\alpha}(R/I)=\dm_{\kk} \widetilde{H}_{|\alpha|-i-1}(\Delta_{|F_\alpha}; \kk).$$
By the universal coefficient theorem, we may just compute the homology modules
over the integer numbers $\widetilde{H}_{|\alpha|-i-1}(\Delta_{|F_\alpha}; \bZ)$. It follows that Betti
numbers depend on the characteristic of the base field whenever these homology groups have torsion.

\vskip 2mm

\begin{example} \label{ex_p}
The most recurrent example that illustrates the behavior of Betti numbers with respect to the characteristic of the base field
is the Stanley-Reisner ideal associated to a minimal triangulation of $\mathbb{P}_{\bR}^2$.
Namely, consider the following ideal in $R=\kk[x_1,\dots, x_6]$:
$$
I=\langle x_1x_2x_3,x_1x_2x_4,x_1x_3x_5,x_2x_4x_5,x_3x_4x_5,x_2x_3x_6,x_1x_4x_6,x_3x_4x_6,x_1x_5x_6,x_2x_5x_6\rangle.
$$
The Betti diagrams in characteristic zero and two respectively are
\begin{center}
{\small $\begin{matrix}
&0&1&2&3\\\text{total:}&1&10&15&6\\\text{0:}&1&\text{.}&\text{.}&\text{.}\\\text{1:}&\text{.}&\text{.}&\text{.}&\text{.}\\\text{2:}&\text{.}&10&15&6\\\end{matrix}
$ \hskip 3cm $\begin{matrix}
&0&1&2&3&4\\\text{total:}&1&10&15&7&1\\\text{0:}&1&\text{.}&\text{.}&\text{.}&\text{.}\\\text{1:}&\text{.}&\text{.}&\text{.}&\text{.}&\text{.}\\\text{2:}&\text{.}&10&15&6&1\\\text{3:}&\text{.}&\text{.}&\text{.}&1&\text{.}\\\end{matrix}
$}
\end{center}
On the other hand, the results that we obtain with the pruned and the Lyubeznik resolution respectively are:

\vskip 2mm

\begin{center}
{\small $\begin{matrix}
&0&1&2&3&4\\\text{total:}&1&10&15&7&1\\\text{0:}&1&\text{.}&\text{.}&\text{.}&\text{.}\\\text{1:}&\text{.}&\text{.}&\text{.}&\text{.}&\text{.}\\\text{2:}&\text{.}&10&15&6&1\\\text{3:}&\text{.}&\text{.}&\text{.}&1&\text{.}\\\end{matrix}
$ \hskip 3cm
$\begin{matrix}
&0&1&2&3&4\\\text{total:}&1&10&27&27&9\\\text{0:}&1&\text{.}&\text{.}&\text{.}&\text{.}\\\text{1:}&\text{.}&\text{.}&\text{.}&\text{.}&\text{.}\\\text{2:}&\text{.}&10&15&18&9\\\text{3:}&\text{.}&\text{.}&12&9&\text{.}\\\end{matrix}
$}
\end{center}
\end{example}

\begin{remark}
All the free resolutions that we consider in this work are obtained
from the Taylor resolution using discrete Morse theory as in
\cite{BW} or \cite{JW}. If one follows closely the construction of the CW-complex
$X_{\cA}$ that is homotopically equivalent to $X_{\tt Taylor}$, one
may check that it is independent of the characteristic of the base
field.
In this sense, the result obtained with the pruned resolution in Example \ref{ex_p} is the best that we can achieve
using discrete Morse theory.
\end{remark}

\subsection{Minimality}

A necessary and sufficient condition for the minimality of a cellular free resolution
$\mathbb{F}_{\bullet}^{(X_{\cA},gr)}$  is described in \cite[Lemma 7.5]{BW}.
The pruned resolution obtained in Theorem \ref{T1} is minimal
if and only if the following condition is satisfied:
the cells $\sigma\in X_{\cA}$ that
survive to any of the pruning processes described in Section $\S 3$ satisfy
$gr(\sigma) \neq gr(\sigma + \varepsilon_j) $ whenever $\sigma_j=0$.

\vskip 2mm

\begin{remark}
Barile \cite{Bar} noticed that, in the case of the Lyubeznik resolution, it is enough
checking out the aforementioned condition for maximal faces of $X_{\tt Lyub}$.
\end{remark}

\vskip 2mm

The Taylor resolution is rarely minimal.
Some examples of minimal Lyubeznik resolutions
include shellable ideals (see \cite{BW}) and the matroid ideal of a
finite projective space (see \cite{Nov}). Finally, let's point out that Torrente and Varbaro \cite{TV15} provided a
fast algorithm for computing Betti diagrams of a monomial ideal taking as a starting point
the Lyubeznik resolution of the ideal. Using the pruned resolution as starting
point of their algorithm should speed up their computation of the Betti diagram.

\section{Betti splittable monomial ideals} \label{split}
A technique that has been successfully applied to describe Betti
numbers of monomial ideals is based on the concept of splitting
introduced by Eliahou and Kervaire in \cite{EK}; see \cite{HVT2} for a nice survey.
A refinement of this concept was coined by Francisco, H\`a and Van Tuyl
in \cite{FHV}.

\begin{definition}
We say that $I=J+K$ is a {\it Betti splitting} for the monomial ideal $I$ if
the following formula for the $\bZ^n$-graded Betti numbers is satisfied:
 $$\beta_{i,\alpha}(I)= \beta_{i,\alpha}(J)+\beta_{i,\alpha}(K)+\beta_{i-1,\alpha}(J\cap
K)\,.$$
\end{definition}

We can mimic the same concept introducing the {\it pruned Betti numbers} as
the Betti numbers that we obtain in the pruned free resolution of
Section \ref{pruned1}, and  that we will  denote by
$\overline{\beta}_{i,\alpha}(I)$ to distinguish them from the usual Betti
numbers.

\begin{definition}
We say that $I=J+K$ is a {\it pruned Betti splitting} for the monomial ideal $I$ if
the following formula for the $\bZ^n$-graded Betti numbers is satisfied:
 $$\overline{\beta}_{i,\alpha}(I)= \overline{\beta}_{i,\alpha}(J)+\overline{\beta}_{i,\alpha}(K)+\overline{\beta}_{i-1,\alpha}(J\cap
K)\,.$$
\end{definition}

\vskip 2mm

\begin{remark}
Obviously, a pruned Betti splitting provides a Betti splitting
whenever the pruning algorithm gives a minimal free resolution
of $I$.
\end{remark}

\vskip 2mm

Necessary and sufficient conditions describing Betti splittings
have been given in \cite{FHV}. Using our main Algorithm \ref{alg1},
we will give some conditions that are easy to check and that provide a pruned Betti
splitting.

\subsection{A partial pruning of $J\cap K$}

Let $I=\langle m_1,\dots,m_{s+r} \rangle$ be a monomial ideal, and consider the Taylor
simplicial complex $X_{\tt Taylor}$ on $r$ vertices which
faces are labeled by $\sigma \in \{0,1\}^{s+r}$. Consider a
decomposition $I=J+K$ with $J=\langle m_1,\dots,m_s \rangle$ and $K=
\langle m_{s+1},\dots,m_{s+r} \rangle$. In order to compute the pruned
resolution of $J$ and $K$ we have to consider the subcomplexes:

\vskip 2mm

\begin{itemize}
 \item [$\cdot$] $X_J\subseteq X_{\tt Taylor}$  with faces labelled by
 $\sigma=(\sigma_1,\dots,\sigma_s,0,\dots,0) \in \{0,1\}^{s+r}$, and
 \item [$\cdot$] $X_K\subseteq X_{\tt Taylor}$  with faces labelled by
 $\sigma=(0,\dots,0,\sigma_{s+1},\dots,\sigma_{s+r}) \in \{0,1\}^{s+r}$.
\end{itemize}

\vskip 2mm

Denote by $X'$ the set obtained by removing the faces of
$X_J$ and $X_K$ from $X_{\tt Taylor}$. Notice that  $X'$
 is not a simplicial subcomplex of $X_{\tt Taylor}$ and, in particular,
 it is not the Taylor simplicial complex associated to the intersection.
 However, applying a partial pruning algorithm to the Taylor complex  $X_{J\cap K}$
 associated to a (possibly non-minimal) set of generators of  $J\cap K$
  we obtain  $X'$.
 The proof of this fact is quite tedious but straightforward.
 We will simply sketch it  and leave the details for the interested reader.

 \vskip 2mm

  Consider the (possibly non-minimal) set of generators of  $J\cap K$  given by
 the monomials
 $$\{ m_{1,s+1},\dots, m_{s,s+1},  m_{1,s+2}, \dots, m_{s,s+2},  \dots, m_{1,r}, \dots, m_{s,s+r}\},$$
 where $m_{i,s+1+k}=\lcm(m_i,m_{s+1+k})$.  In what follows we will also denote $m_{i_1,\dots, i_\ell}=\lcm(m_{i_1},\dots, m_{i_\ell})$.
The Taylor complex $X_{J\cap K}$ is the full simplicial complex on $sr$ vertices with faces labelled by
$\sigma \in \{0,1\}^{sr}$. Notice that the  vertex $\varepsilon_{j}$  corresponds to the monomial
$m_{\varepsilon_j}:=m_{i,s+1+k}$ if we have
$j=ks+i$ for $k\in \{0,\dots, r-1\}$ and $i\in\{1,\dots, s\}$.  In general, a face $\sigma$ corresponds
to $m_\sigma:=\lcm (m_{\varepsilon_j} \hskip 2mm | \hskip 2mm \sigma_j = 1)$.

 \vskip 2mm

 Now we want to apply our Algorithm \ref{alg1} but we only want to prune the edges
 ${\sigma + \varepsilon_j} \lra{\sigma }$ whenever the corresponding $\lcm$'s involve the same
 monomials $m_i \in I$. For example, we have  that
 $\lcm(m_{i,b}, m_{a,s+1+k})=\lcm(m_{i,s+1+k},m_{i,b}, m_{a,s+1+k})=m_{i,s+1+k,a,b}$  so both $\lcm$'s
 involve the same monomials $m_i,m_{s+1+k},m_a,m_b \in I$ for all $a,b$.
 This means that the corresponding edge would be pruned at step $j=ks+i$.

 \vskip 2mm

 The partial pruning algorithm that we apply is the following:

 \vskip 2mm

 \begin{algorithm}\label{alg4} { (Partial pruning)}

\vskip 2mm

{\rm \noindent {\sc Input:} The set of edges $E_{X_{J\cap K}}$.

\vskip 2mm

For $j$ from $1$ to $sr$, incrementing by $1$

\vskip 2mm

 \begin{itemize}

\item[\textbf{(j)}] ${\it Prune}$ the edge ${\sigma + \varepsilon_j} \lra{\sigma }$ for
all $\sigma \in \{0,1\}^{sr}$ such that $\sigma_j=0$, where `prune'
means remove the edge if  it survived after step $(j-1)$ and
$m_\sigma$ involves the monomials $m_i$ and $m_{s+1+k}$, where $j=ks+i$.

\end{itemize}

\vskip 2mm

\noindent {\sc Return:} The set $\cA'$ of edges that have been pruned.

}

\end{algorithm}

The tedious part is to check that using this algorithm we obtain $X'$, that is, $X_{\cA'}=X'$.
To illustrate this fact we present the following:

\begin{example}
Let $I=\langle m_1,m_2,m_3,m_4 \rangle$ be a monomial ideal generated by four monomials.
We are going to consider the following decompositions:

\vskip 2mm

$\cdot$ Consider  $I=J+K$ with $J=\langle m_1,m_2,m_3 \rangle$ and $K= \langle m_4 \rangle$.
In this case we have  $J\cap K= \langle m_{1,4}, m_{2,4},m_{3,4} \rangle $  so  $X'$
coincides with $X_{J\cap K}$.

\vskip 2mm

$\cdot$ If  $J=\langle m_1,m_2 \rangle$ and $K= \langle m_3, m_4 \rangle$ then
we have that $J\cap K= \langle m_{1,3}, m_{2,3}, m_{1,4}, m_{2,4} \rangle $.
The directed graph associated to the Taylor simplicial complex of $I$ is:

{\tiny $${\xymatrix { &&&& {\bf m_{1,2,3,4}} \ar@{.>}[dlll] \ar[d] \ar[dl] \ar[dr] & \\
& {\bf m_{1,2,3}}  \ar[d] \ar[dl] \ar[dr] & & {\bf m_{1,2,4}} \ar@{.>}[dlll] \ar[d] \ar[dr] & {\bf m_{1,3,4}} \ar@{.>}[dlll] \ar[dr]|\hole \ar[dl]|\hole& {\bf m_{2,3,4}} \ar[dl] \ar[d]\\
m_{1,2}  \ar[d] \ar[dr] &{\bf m_{1,3}} \ar[dr]|\hole \ar[dl]|\hole & {\bf m_{2,3}}\ar[dl] \ar[d]& {\bf m_{1,4}} \ar@{.>}[dlll] \ar[dr] & {\bf m_{2,4}}  \ar@{.>}[dlll]\ar[d]& m_{3,4}  \ar@{.>}[dlll] \ar[dl]\\
m_{1} &m_{2} & m_{3} &  & m_{4}  &  \\
}}
$$}

\noindent
and
the monomials corresponding to the set $X'$ are indicated with bold letters.
On the other hand, applying the partial pruning algorithm to the Taylor complex $X_{J\cap K}$ we also get
the set $X'$ as shown in the following graph:

{\tiny $${\xymatrix { &&&& m_{1,2,3,4} \ar@{.>}[dlll] \ar[d] \ar[dl]& \\
& m_{1,2,3,4} \ar[d] \ar[dl]  & & m_{1,2,3,4}  \ar@{.>}[dlll]  \ar[dr] & {\bf m_{1,2,3,4}} \ar@{.>}[dlll] \ar[dr]|\hole \ar[dl]|\hole & m_{1,2,3,4} \ar[dl] \ar[d] \ar@2{->}[ul] \\
{\bf m_{1,2,3}} \ar[d] \ar[dr] & {\bf m_{1,3,4}} \ar[dr]|\hole \ar[dl]|\hole & m_{1,2,3,4}\ar[dl] \ar[d]  \ar@2{->}[ul]& m_{1,2,3,4} \ar@2{->}[u] \ar@{.>}[dlll] \ar[dr] & {\bf m_{2,3,4}} \ar@{.>}[dlll]\ar[d]& {\bf m_{1,2,4}} \ar@{.>}[dlll] \ar[dl]\\
{\bf m_{1,3}} & {\bf m_{2,3}} & {\bf m_{1,4}} &  & {\bf m_{2,4}}  &  \\
}}
$$}

\end{example}

\subsection{A sufficient condition for Betti splittings}

Let $I=J+K$ be a decomposition of a monomial ideal.
In order to have a Betti splitting, it is enough to check that applying Algorithm \ref{alg1} to $I$,
the pruning steps are realized
independently within the edges associated to $X_J$, $X_K$ and $X'$. Notice that in this case we obtain the pruned Betti numbers of $I$ from the pruned Betti numbers of $J$, $K$ and $J\cap K$ and thus we get a sufficient condition for a Betti splitting. We collect this fact in the following:

\begin{proposition}
Let $I=\langle m_1,\dots,m_{s+r} \rangle\subseteq \kk[x_1,\dots,x_n]$ be a monomial
ideal and consider the ideals $J=\langle m_1,\dots,m_s \rangle$ and $K=
\langle m_{s+1},\dots,m_{s+r} \rangle$.  If  we only perform pruning steps within the
edges of $X_J$, $X_K$ or $X'$ separately in Algorithm
\ref{alg1}, then $I=J+K$ is a pruned Betti splitting for $I$.
\end{proposition}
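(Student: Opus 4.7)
The plan is to exploit the set-theoretic decomposition
$X_{\tt Taylor} = X_J \sqcup X_K \sqcup X'$
and to show that, under the hypothesis, both the acyclic matching $\cA_P$ and the corresponding set of critical cells split along this decomposition.

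First I would observe that the hypothesis is exactly saying that $\cA_P$ decomposes as a disjoint union $\cA_P = \cA_P^J \sqcup \cA_P^K \sqcup \cA_I'$, each summand an acyclic matching supported on its own piece. Inspecting Algorithm \ref{alg1}, the pruning steps restricted to $X_J$ reproduce verbatim Algorithm \ref{alg1} applied to the ideal $J$ (the conditions $\sigma_j = 0$ and $gr(\sigma) = gr(\sigma + \varepsilon_j)$ are intrinsic to $X_J$, and the steps $(j)$ with $j > s$ produce no edges internal to $X_J$ since those would require a coordinate flip at an index outside $\{1,\dots,s\}$). Hence $\cA_P^J = \cA_P(J)$, so the critical cells of $X_J$ in dimension $i-1$ and multidegree $\alpha$ contribute exactly $\overline{\beta}_{i,\alpha}(J)$ to the count of critical cells of $X_{\cA_P}$. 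The argument for $X_K$ is symmetric and gives the contribution $\overline{\beta}_{i,\alpha}(K)$.

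Next I would treat the mixed cells by invoking the identification $X_{\cA'} = X'$ established in the preceding subsection, where $\cA'$ is the partial pruning matching on $X_{J \cap K}$ produced by Algorithm \ref{alg4}. Under the standing hypothesis, the matching $\cA_I'$ on $X'$ coming from Algorithm \ref{alg1} for $I$ consists precisely of the pruning steps needed to extend $\cA'$ to the full pruning $\cA_P(J \cap K)$. Consequently the critical cells of $X'$ under $\cA_I'$ are in bijection with the critical cells of the pruned resolution of $J \cap K$, and this bijection preserves multidegree while shifting dimension by one: a mixed face $\sigma \in X'$ with $a$ vertices from $\{1,\dots,s\}$ and $b$ from $\{s+1,\dots,r\}$ has dimension $a+b-1$ in $X_{\tt Taylor}$, whereas its counterpart in the pruned $X_{J \cap K}$ has dimension $a+b-2$ (the full $(ab-1)$-simplex on the vertices $m_{i_k,j_l}$ having been collapsed by $\cA'$). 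Therefore the critical $(i-1)$-cells in $X'$ of multidegree $\alpha$ contribute exactly $\overline{\beta}_{i-1,\alpha}(J \cap K)$.

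Summing the three contributions yields the claimed pruned Betti splitting identity. The main obstacle is justifying the dimension-shift bijection for the mixed piece: the full $(ab-1)$-simplex on the generators $m_{i_k,j_l}$ of $J \cap K$ must be compressed into a single cell of dimension $a+b-2$ in the partial pruning, which is exactly the content of the identification $X_{\cA'} = X'$ from the preceding subsection. Granted that (tediously verified) fact, the remainder of the proof is straightforward bookkeeping.
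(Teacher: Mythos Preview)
Your proposal is correct and follows essentially the same approach as the paper. In fact the paper does not supply a formal proof of this proposition: it is stated as an immediate consequence of the sentence preceding it (``it is enough to check that \dots\ the pruning steps are realized independently within the edges associated to $X_J$, $X_K$ and $X'$''), together with the identification $X_{\cA'}=X'$ from \S5.1, which the authors explicitly leave as a tedious verification. Your write-up fleshes out exactly this argument --- the decomposition $X_{\tt Taylor}=X_J\sqcup X_K\sqcup X'$, the easy observation that the restricted matchings on $X_J$ and $X_K$ coincide with $\cA_P(J)$ and $\cA_P(K)$, and the appeal to $X_{\cA'}=X'$ for the mixed part with the dimension shift --- and you correctly flag the last step as the main obstacle.

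One small caution on phrasing: saying that $\cA_I'$ ``consists precisely of the pruning steps needed to extend $\cA'$ to the full pruning $\cA_P(J\cap K)$'' could be read as a literal containment $\cA'\subseteq\cA_P(J\cap K)$ of matchings on $X_{J\cap K}$, which is not what you need and is not obviously true. What you actually use (and state correctly in the next sentence) is the weaker fact that the \emph{critical cells} match up, with the grading preserved and homological degree shifted by one. That is exactly what the paper's identification $X_{\cA'}=X'$ provides, so your argument is sound once that identification is granted.
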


In the particular case that we remove just one generator from our initial ideal,
we obtain the following:

\begin{corollary}
Let $I=\langle m_1,\dots,m_{s+1} \rangle\subseteq \kk[x_1,\dots,x_n]$ be
a monomial ideal and consider the ideals $J=\langle
m_1,\dots,m_{s} \rangle$ and $K= \langle m_{s+1} \rangle$. If no
pruning has been made at the step $s+1$ of Algorithm \ref{alg1}, then
$I=J+K$ is a pruned Betti splitting for $I$.
\end{corollary}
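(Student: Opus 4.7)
The plan is to deduce this directly from the preceding Proposition, which guarantees a pruned Betti splitting whenever every pruning performed by Algorithm \ref{alg1} stays internal to one of the three subcomplexes $X_J$, $X_K$ or $X'$. The task thus reduces to showing that the hypothesis ``no pruning at step $r$'' forces that internal behavior in our setting. By Lemma \ref{minimal}, I may first assume that $\{m_1,\dots,m_r\}$ is a minimal set of generators for $I$ without altering the output of Algorithm \ref{alg1}. Since $K$ is principal, $X_K$ consists only of $0$ and $\varepsilon_r$, and the unique edge joining them cannot be pruned.

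I would then classify the edges $\sigma \to \sigma + \varepsilon_k$ of $X_{\tt Taylor}$ by the value of $k$ and of $\sigma_r$. A brief case check shows that the only \emph{crossing} edges fall into two families: (a) edges $\sigma \to \sigma + \varepsilon_r$ with $\sigma_r=0$ and $\sigma\neq 0$, joining $X_J$ to $X'$; and (b) edges $\varepsilon_r \to \varepsilon_r + \varepsilon_k$ with $k<r$, joining $X_K$ to $X'$. The type (a) crossings would be pruned at step $r$, so the hypothesis rules them out immediately.

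The type (b) crossings are the main obstacle since they live at steps $k<r$, which the hypothesis does not control. To handle them I would note that such an edge is pruned only if $gr(\varepsilon_r)=gr(\varepsilon_r+\varepsilon_k)$, equivalently $m_k\mid m_r$, which contradicts the minimality assumption. Once both families of crossing edges are excluded, every pruning takes place strictly inside one of $X_J$, $X_K$ or $X'$, and the Proposition yields the pruned Betti splitting $I=J+K$. The key subtlety is exactly this use of Lemma \ref{minimal}, which translates the local hypothesis at step $r$ into a global statement about crossing edges throughout the algorithm.
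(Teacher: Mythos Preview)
Your approach is exactly the one the paper has in mind: the Corollary is stated there without proof as an immediate consequence of the Proposition, and your classification of the crossing edges into types (a) and (b) is the natural way to verify the Proposition's hypothesis. Type (a) is handled correctly by the assumption on step $r$.

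There is, however, a genuine gap in your treatment of type (b). Your appeal to Lemma \ref{minimal} is not legitimate here: that lemma only asserts that the pruned resolution of $I$ is unchanged when one passes to the minimal generating set. It says nothing about whether the hypothesis ``no pruning at step $r$'' transfers, and---more importantly---passing to a minimal generating set changes $r$, and hence the ideals $J=\langle m_1,\dots,m_{r-1}\rangle$ and $K=\langle m_r\rangle$, i.e.\ the very decomposition you are trying to analyse. Concretely, take $m_1=x$ and $m_2=xy$. At step $1$ the edge $\varepsilon_2\to\varepsilon_1+\varepsilon_2$ is pruned (a type (b) crossing, since $m_1\mid m_2$); afterwards $\varepsilon_2$ and $\varepsilon_1+\varepsilon_2$ are gone, so nothing can be pruned at step $2$ and the hypothesis of the Corollary holds. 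Yet with $J=\langle x\rangle$, $K=\langle xy\rangle$, $J\cap K=\langle xy\rangle$, one checks directly that $\overline\beta_{0,(1,1)}(I)=0$ while $\overline\beta_{0,(1,1)}(J)+\overline\beta_{0,(1,1)}(K)+\overline\beta_{-1,(1,1)}(J\cap K)=1$, so $I=J+K$ is \emph{not} a pruned Betti splitting.

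The fix is not to reduce via Lemma \ref{minimal}, but simply to read the Corollary under the standing assumption---tacit throughout the paper---that $\{m_1,\dots,m_r\}$ is already a minimal generating set. Under that hypothesis $m_k\nmid m_r$ for every $k<r$, whence $gr(\varepsilon_r)\neq gr(\varepsilon_r+\varepsilon_k)$ and no type (b) edge is ever eligible for pruning, exactly as you argue. With this correction your proof is complete and coincides with the paper's intended argument.
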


\vskip 2mm

 Betti splitting techniques can be used to provide a recursive
 method to compute Betti numbers of monomial ideals. For example,
 the case of edge ideals of graphs has been successfully studied in \cite{HVT1} and \cite{FHV}
where they considered {\it splitting edges} and {\it
splitting vertices}. Splitting edges are easy to describe (see
\cite[Theorem 3.4]{FHV}) but, in general they are difficult to find.
On the other hand, every vertex is a splitting vertex except for some limit cases where the vertex is
isolated or its complement consists of isolated vertices.

\vskip 2mm
One can also check that any vertex also
provides a pruned Betti splitting.
Indeed, let $I=I(G)\subseteq \kk[x_1,\dots,x_n]$ be the edge ideal of a
graph $G$ on $n$ vertices. Consider a vertex, say $x_n$, and the
decomposition $I=J+K$ where $J=I(G\backslash \{x_n\})$ is the edge
ideal of the graph obtained from $G$ by removing
the vertex $x_n$ and all the edges passing through this vertex. Notice that  $K$ is the edge ideal of a bipartite graph
$\mathcal{K}_{1,d}$ where $d$ is the number of vertices adjacent to $x_n$ in $G$. We
have a pruned Betti splitting because the ideal $J$ does not involve
the variable $x_n$ and the pruning steps of the algorithm are
performed within  $X_{K}$ and $X'$ separately. Moreover, it
is not difficult to see that the pruning algorithm provides a minimal
free resolution for $K$, and hence
$$\overline{\beta}_{i,\alpha}(I)= \overline{\beta}_{i,\alpha}(J)+{\beta}_{i,\alpha}(K)+\overline{\beta}_{i-1,\alpha}(J\cap
K).$$
In particular, if Algorithm \ref{alg1} provides a minimal free
resolution for $J$ and $J\cap K$, then it is also provides a minimal free resolution for $I$.

\vskip 2mm

For paths and cycles, we can use these splitting
techniques to prove that the pruning algorithm will always provide a minimal
free resolution as we already observed in Example \ref{ex5path5cycle} for the $5$-path and the $5$-cycle.
The Betti numbers of these ideals have already
been computed by Jacques in \cite{Jac04}.

\vskip 2mm

\begin{example}[$n$-paths]\label{ex_npath}
Let $I=\langle
x_1x_2,x_2x_3,\dots,x_{n-1}x_n \rangle$ be the edge ideal of an
$n$-path. A decomposition $I=J+K$ with $J=\langle
x_1x_2,x_2x_3,\dots,x_{n-2}x_{n-1} \rangle$ and $K=\langle
x_{n-1}x_n \rangle$ is a pruned Betti splitting simply because the
ideal $J$ does not involve the vertex $x_n$. Therefore we have
$$\overline{\beta}_{i,\alpha}(I)= \overline{\beta}_{i,\alpha}(J)+\overline{\beta}_{i,\alpha}(K)+\overline{\beta}_{i-1,\alpha}(J\cap
K).$$ Indeed, this is a Betti splitting and the pruning algorithm
provides a minimal free resolution, thus
$${\beta}_{i,\alpha}(I)= {\beta}_{i,\alpha}(J)+{\beta}_{i,\alpha}(K)+{\beta}_{i-1,\alpha}(J\cap
K).$$ By induction, we get a minimal free resolution for the ideals
$J$ and $K$ so we only have to control the intersection in order to
get the desired result. Recall that, using Lemma \ref{minimal}, we
only have to consider a
 minimal set of generators. In our case we have
$$J\cap K=\langle \underbrace{x_1x_2x_{n-1}x_n, \dots,
x_{n-4}x_{n-3}x_{n-1}x_n}_{J'},\underbrace{x_{n-2}x_{n-1}x_n}_{K'}\rangle.
$$
If we consider the decomposition $J\cap K=J'+K'$, we have a Betti splitting.
Namely, the pruning algorithm applied on the ideals $J'$ and $J'\cap
K'$ is equivalent to the one given for the path $\langle x_1x_2,
\dots, x_{n-4}x_{n-3}\rangle$, and we are done by induction.
\end{example}

\vskip 2mm

\begin{example}[$n$-cycles]
Let $I= \langle x_1x_2,x_2x_3,\dots,x_{n-1}x_n,x_nx_1 \rangle$ be the edge ideal of
an $n$-cycle. The decomposition $I=J+K$ with $J= \langle
x_1x_2,x_2x_3,\dots,x_{n-1}x_{n} \rangle$ and $K=\langle x_{n}x_1
\rangle$ is not a Betti splitting by \cite[Theorem 3.4]{FHV}.
Indeed, there is a pruning in the last step of Algorithm \ref{alg1}.

\vskip 2mm

It is more convenient to consider a splitting vertex . Namely,
consider $I=J+K$ with $J= \langle x_1x_2,x_2x_3,\dots,x_{n-2}x_{n-1}
\rangle$ and $K=\langle x_{n-1}x_{n}, x_{n}x_1 \rangle$. Since $J$ and $K$
are the ideal of an $(n-1)$-path and a $3$-path respectively, the pruning algorithm provides a
minimal free resolution as shown in Example \ref{ex_npath}. Therefore we have:
$$\overline{\beta}_{i,\alpha}(I)= {\beta}_{i,\alpha}(J)+{\beta}_{i,\alpha}(K)+\overline{\beta}_{i-1,\alpha}(J\cap
K).$$

\vskip 2mm
A minimal set of generators of $J\cap K$ is
$$\langle \underbrace{x_2x_3x_{n-1}x_{n},\dots,x_{n-4}x_{n-3}x_{n-1}x_{n},
x_{n-2}x_{n-1}x_{n}}_{J'}, \underbrace{
x_1x_2x_{n},x_1x_3x_{4}x_{n},\dots,x_{1}x_{n-3}x_{n-2}x_{n}}_{K'}
\rangle\,.$$
We have a pruned Betti splitting given by $J\cap K= J' + K'$. The
pruning algorithm gives a minimal free resolution for the ideals
$J'$ and $K'$ as we have seen when dealing with the case of paths. A
minimal set of generators for the intersection $J'\cap K'$ is
$$\langle x_1x_2x_3x_{n-1}x_n,x_1x_3x_{4}x_{n-1}x_n, \dots ,
x_1x_{n-3}x_{n-2}x_{n-1}x_n,x_{1}x_2x_{n-2}x_{n-1}x_n \rangle$$ but
the pruning algorithm applied to this ideal is equivalent to the one
for the cycle $\langle x_2x_3,x_3x_{4}, \dots ,$ 
$x_{n-3}x_{n-2},x_2x_{n-2} \rangle$, and we are done by induction.
\end{example}

\section*{Acknowledgement}
We would like to thank the referees for their meticulous comments that helped us to improve this manuscript.

\section*{References}


\begin{thebibliography}{23}

\bibitem{cocoa}
J.~Abbot and A. M.~Bigatti,
\newblock{CoCoALib: a C++ library for doing Computations in Commutative Algebra,}
available at {\tt http://cocoa.dima.unige.it/cocoalib}.

\bibitem{AY}
J.\`Alvarez Montaner and K.~Yanagawa,
\newblock {\it Lyubeznik numbers of local rings and linear strands of graded ideals,}
\newblock Nagoya Math. J. {\bf 231} (2019), 23--54.

\bibitem{Bar}
M.~Barile,
\newblock {\it On ideals whose radical is a monomial ideal,}
\newblock {Comm. Alg.} {\bf 33} (2005), 4479--4490.

\bibitem{BW}
E.~Batzies and V.~Welker,
\newblock {\it Discrete Morse theory for cellular resolutions,}
\newblock {J. Reine Angew. Math.} {\bf 543} (2002), 147--168.

\bibitem{BPS}
D.~Bayer, I.~Peeva  and B.~Sturmfels,
\newblock {\it Monomial resolutions,}
\newblock {Math. Res. Lett.} {\bf 5} (1998), 31--46.

\bibitem{BS}
D.~Bayer and B.~Sturmfels,
\newblock {\it Cellular resolutions of monomial modules,}
\newblock {J. Reine Angew. Math.} {\bf 502} (1998), 123--140.

\bibitem{CH}
M.~K.~Chari,
\newblock {\it On discrete Morse functions and combinatorial decompositions,}
\newblock{Discrete Math.} {\bf 217} (2000), 101--113.

\bibitem{EK}
S.~Eliahou and M.~Kervaire,
\newblock {\it Minimal resolutions of some monomial ideals,}
\newblock { J. Algebra} {\bf 129} (1990), 1--25.

\bibitem{For}
R.~Forman,
\newblock {\it Morse theory for cell complexes,}
\newblock  {Adv. Math.} {\bf 134} (1998), 90--145.

\bibitem{FHV}
C.~Francisco, H.~T.~H\`a and A.~Van Tuyl,
\newblock {\it Splittings of monomial ideals,}
\newblock  {Proc. Amer. Math. Soc.} {\bf 137} (2009), 3271--3282.

\bibitem{GPW}
V.~Gasharov, I.~Peeva, and V.~Welker,
\newblock {\it The lcm-lattice in monomial resolutions,}
\newblock { Math. Res. Lett.} {\bf 6} (1999), 521--532.

\bibitem{HVT1}
H.~T.~H\`a and A.~Van Tuyl,
\newblock {\it Splittable ideals and the resolution of monomial ideals,}
\newblock  {J. Algebra} {\bf 309} (2007), 405--425.

\bibitem{HVT2}
H.~T.~H\`a and A.~Van Tuyl,
\newblock {\it Resolutions of squarefree monomial ideals via facet ideals: a survey,}
\newblock  {Contemp. Math.} {\bf 441} (2007), 91--117.

\bibitem{Hoc}
M.~Hochster,
\newblock {\it Cohen-Macaulay rings, combinatorics, and simplicial complexes,}
Ring theory, II (Proc. Second Conf., Univ. Oklahoma, Norman, Okla., 1975), pp. 171-223.
Lecture Notes in Pure and Appl. Math., Vol. 26, Dekker, New York, 1977.

\bibitem{Jac04}
S.~Jacques,
\newblock {\it Betti Numbers of Graph Ideals}, Ph.D. Thesis, Univ. of Sheffield (2004), available at  {\tt arXiv:math/0410107}.

\bibitem{JW}
M.~J\"ollenbeck and V.~Welker,
\newblock {\it Minimal resolutions via algebraic discrete Morse theory},
\newblock  { Mem. Amer. Math. Soc.} {\bf 197}  (2009).

\bibitem{Lyu88}
G.~Lyubeznik,
\newblock {\it A new explicit finite free resolution of ideals generated by monomials in an $R$-sequence},
\newblock {\it J. Pure and Appl. Algebra} {\bf 51} (1988),  193--195.

\bibitem{MSY}
E. Miller, B. Sturmfels and K. Yanagawa,
\newblock {\it Generic and cogeneric monomial ideals},
\newblock  { J. Symbolic Comput.} {\bf 29} (2000), 691--708.

\bibitem{Nov}
I.~Novik,
\newblock {\it Lyubeznik's resolution and rooted complexes,}
\newblock {J. Algebraic Comb.} {\bf 16} (2002), 97--101.

\bibitem{Sko}
E.~Sk\"oldberg,
\newblock {\it Morse theory from an algebraic viewpoint,}
\newblock { Trans. Amer. Math. Soc.} {\bf 358} (2006), 115--129.

\bibitem{Tay66}
D.~Taylor,
\newblock {\it Ideals generated by an $R$-sequence}, PhD-Thesis, University of Chicago, 1966.

\bibitem{TV15}
M-L~Torrente and M.~Varbaro,
\newblock {\it Computing the Betti table of a monomial ideal: a reduction algorithm,}
\newblock  { J. Symbolic Comput.} {\bf 87} (2018), 87--98.

\bibitem{Vel}
M.~Velasco,
\newblock {\it Minimal free resolutions that are not supported by a CW-complex,}
\newblock  {J. Algebra} {\bf 319} (2008), 102--114.

\bibitem{Wel07}
V.~Welker,
\newblock {\it Discrete Morse theory and free resolutions,}
\newblock {in:  Algebraic Combinatorics}, Universitext, Springer, Berlin (2007), 81--172.

\end{thebibliography}
\end{document}